\documentclass[a4paper]{amsart}


\usepackage[utf8]{inputenc}
\usepackage{amsmath}
\usepackage{amssymb}
\usepackage{amsthm}
\usepackage{tikz}
\usepackage{graphicx}
\usepackage{float}
\usepackage{svg}
\usepackage{verbatim}
\usepackage{url}
\usepackage{mathtools}
\usepackage{microtype}
\usepackage{thm-restate}
\usepackage{enumitem}
\usepackage[hidelinks]{hyperref}
\usepackage[nameinlink]{cleveref}
\usepackage{caption}
\captionsetup{font=small}

\let\cref\Cref
\Crefname{enumi}{}{}


\theoremstyle{plain}
\newtheorem{theorem}{Theorem}[section]
\theoremstyle{definition}
\newtheorem{definition}[theorem]{Definition}
\theoremstyle{plain}
\newtheorem{lemma}[theorem]{Lemma}

\newtheorem{question}[theorem]{Question}
\theoremstyle{definition}
\newtheorem{example}[theorem]{Example}

\newenvironment{Example}{\begin{example}\rm}{\end{example}}
\theoremstyle{plain}
\newtheorem{proposition}[theorem]{Proposition}

\theoremstyle{remark}
\newtheorem{remark}[theorem]{Remark}


\newcommand{\qua}{\hskip 0.4em \ignorespaces}
\def\arxiv#1{\relax\ifhmode\unskip\qua\fi
\href{http://arxiv.org/abs/#1}%
{\tt arXiv:\penalty -100\unskip#1}}

\def\MR#1{\relax\ifhmode\unskip\qua\fi
\href{https://mathscinet.ams.org/mathscinet-getitem?mr=#1}{\tt MR#1}}
\def\ZB#1{\relax\ifhmode\unskip\qua\fi
\href{https://zbmath.org/?q=an:#1}{\tt Zbl\:#1}}
\def\xox#1{\csname xx#1\endcsname}

\renewenvironment{thebibliography}[1]{
  \begin{oldthebibliography}{#1}\small
    \setlength{\itemsep}{.5ex}
    \setlength{\parskip}{0em}
}
{
  \end{oldthebibliography}
}


\pdfsuppresswarningpagegroup=1
\pdfstringdefDisableCommands{%
  \def\unskip{}%
  \def\\{}%
  \def\texttt#1{<#1>}%
  
}


\let\stdthebibliography\thebibliography
\let\stdendthebibliography\endthebibliography


\newcommand{\R}{\mathbb{R}}
\DeclareMathOperator{\cl}{cl}


\title{Homogeneous braids are visually prime}

\author{Peter Feller}
\author{Lukas Lewark}
\author{Miguel Orbegozo Rodriguez}
\address{Université de Neuchâtel, 
Rue Emile-Argand 11, 2000 Neuchâtel, Switzerland}
\email{peter.feller@unine.ch}
\urladdr{\url{https://www.unine.ch/math/en/pfeller/}}
\address{ETH Z\"urich, R\"amistrasse 101, 8092 Z\"urich, Switzerland}
\email{lukas.lewark@math.ethz.ch}
\urladdr{\url{https://people.math.ethz.ch/~llewark/}}
\address{Université de Neuchâtel, 
Rue Emile-Argand 11, 2000 Neuchâtel, Switzerland}
\email{miguel.orbegozo@unine.ch}
\urladdr{\url{https://sites.google.com/view/miguel-orbegozo-rodriguez}}
\begin{document}

\begin{abstract}
We show that closures of homogeneous braids are visually prime, addressing a question of Cromwell.
The key technical tool for the proof is the following criterion concerning primeness of open books, which we consider to be of independent interest. For open books of $3$--manifolds the property of having no fixed essential arcs is preserved under essential Murasugi sums with a strictly right-veering open book, if the plumbing region of the original open book veers to the left. %
We also provide examples of open books in $S^3$ demonstrating that %
primeness is not necessarily preserved under essential Murasugi sum, in fact not even under stabilizations a.k.a.\ Hopf plumbings. Furthermore, we find that trefoil plumbings need not preserve primeness. In contrast, we establish that figure-eight knot plumbings do preserve primeness.
\end{abstract}
\maketitle
\section{Introduction}

Knot theorists have a somewhat ambivalent relationship with the most common way of representing knots, i.e.~by knot diagrams. The latter are the result of projecting a knot---a circle embedded in~$\R^3$---to a generic $2$--dimensional plane retaining crossing information. Indeed, many simple 3--dimensional properties, e.g.~primeness, are often hard to discern from a diagram. However, for particular classes of knot diagrams, one can read off a given property directly from the diagram.
Concerning primeness, the two first key results are due to Menasco~\cite{Menasco_84} and Cromwell~\cite{Cromwell_93}, respectively: they show that primeness of links---embeddings of a collection of circles in~$\R^3$---represented by alternating and braid positive diagrams can be read off from such diagrams. More precisely, if the link is not prime, i.e.~if there exists a decomposition sphere, then there exists a decomposition sphere that arises from a decomposition circle in the diagram. These results are condensed into the slogan ``alternating links and positive braids are visually prime''. Later, Ozawa~\cite{Ozawa_02} extended Cromwell's result to positive diagrams.
Cromwell more generally conjectured a link diagram to be visually prime if its Seifert genus is realized by a Seifert surface coming from Seifert's algorithm. More precisely, given a link $L$ with a diagram $D$
for which Seifert's algorithm yields a minimal Seifert surface, the following ought to hold:
if $L$ is not prime, then $D$
features a decomposition circle that gives rise to a decomposition sphere~\cite%
{Cromwell_93}. Cromwell points out that for diagrams arising from braids, Seifert's algorithm yields a minimal Seifert surface if and only if the braid is homogeneous. An $n$--stranded braid word~$\beta$---an expression in Artin's standard generators $\sigma_1,\dots,\sigma_{n-1}$ and their inverses~\cite{Artin_25}---is \emph{homogeneous} if for each $i$ only one of $\sigma_i$  and $\sigma_i^{-1}$ occurs. We resolve Cromwell's conjecture in the positive for diagrams that arise from braids\footnote{We note that Stoimenow found a counterexample to the full conjecture \cite[Figure 15]{Counterexample_Stoimenow}.}, that is, we prove the following.

\begin{theorem}[Homogeneous braids are visually prime]\label{thm:main}

Let $L$ be the closure of a braid given by a homogeneous braid word and let $D$ be the link diagram associated with that braid word. If $L$ is not prime, then there exists a decomposition circle for $D$ that gives rise to a decomposition sphere for~$L$.
\end{theorem}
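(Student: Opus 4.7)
I would work in the open book framework. Since $\beta$ is homogeneous, a classical theorem of Stallings implies that the Seifert-algorithm surface $\Sigma$ associated to $D$ is a fiber surface, so $(\Sigma,\phi)$ with $\phi$ its monodromy is an open book decomposition of $S^3$ with binding $L$. A standard innermost-curve and outermost-arc argument using the incompressibility of $\Sigma$ reduces primeness of $L$ to a monodromy statement: if $L$ is not prime, then after isotopy some decomposition sphere meets $\Sigma$ in a single properly embedded essential arc $\alpha$ satisfying $\phi(\alpha)\simeq\alpha$ rel $\partial$. Thus it is enough to show that every such $\phi$-fixed essential arc can be isotoped to project to a decomposition circle in $D$ that gives rise to a decomposition sphere for $L$.

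Call an arc in $\Sigma$ \emph{diagrammatic} if it is isotopic rel endpoints into the union of a single Seifert disk of $D$ with two twisted bands attached on opposite sides. Such arcs project precisely to visual decomposition circles of $D$ that give decomposition spheres, so the theorem becomes: every $\phi$-fixed essential arc on $\Sigma$ is isotopic to a diagrammatic arc.

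The main engine is the iterated Murasugi-sum structure of $(\Sigma,\phi)$ read off from the homogeneous word. Grouping consecutive equal-sign occurrences of each generator $\sigma_i$ into a \emph{column} exhibits $(\Sigma,\phi)$ as an iterated essential Murasugi sum of elementary pieces that are strictly right-veering (for $\sigma_i$-columns) or strictly left-veering (for $\sigma_i^{-1}$-columns). I would build $(\Sigma,\phi)$ column by column, choosing the order so that at each step the plumbing region lies in an already-installed column of opposite veering to the summand being added. The Primeness Criterion of the abstract, together with its mirror, then preserves the ``no fixed essential arc'' property at every step.

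The main obstacle splits in two. First, one must orchestrate the summation order so that a plumbing region of the required veering is present at each stage; homogeneity of $\beta$ is essential here, since it is exactly the property that makes each column uniformly signed and hence uniformly veering, allowing right-veering summands to be plumbed along left-veering regions and vice versa. Second, to actually exhibit a visual decomposition circle rather than only conclude primeness, I would strengthen the inductive invariant from ``$\phi$ has no fixed essential arc'' to ``every $\phi$-fixed essential arc is isotopic to a diagrammatic arc''. The proof of the criterion is refined so that a fixed essential arc in a Murasugi sum either can be pushed off the newly plumbed summand---in which case it descends to a fixed essential arc in the smaller piece---or already lies in the summand and is manifestly diagrammatic. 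Iterating this dichotomy pushes the original arc $\alpha$ down to a fixed essential arc inside a single column, where direct inspection forces it to be diagrammatic, delivering the required decomposition circle in $D$.
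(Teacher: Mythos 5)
Your proposal uses the same machinery as the paper — the Stallings fibration, the open book framework, Murasugi-sum decomposition of $\Sigma(W)$ by the ``columns'' $T(2,\pm k)$, the veeringness of those pieces, and iterated application of the Primeness Criterion together with its mirror. The paper implements this via a \emph{tree of open books} and proves the contrapositive (no honest decomposition circle $\Rightarrow$ no fixed essential arc $\Rightarrow$ prime), whereas you propose a direct argument with a strengthened invariant (``every fixed essential arc is isotopic to a diagrammatic arc''), tracking a given fixed arc down through the Murasugi sum to a single column.

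The gap is in how you handle non-essential Murasugi sums. Your column decomposition is an \emph{essential} iterated Murasugi sum only when $D(W)$ has no decomposition circle, i.e.\ when every seesaw number $g_i(W)\geq 4$; this is exactly the hypothesis of the contrapositive, and it is not free in the direct argument. When $g_i(W)\in\{2,3\}$ the sum at that stage is not essential, the Primeness Criterion and the refined dichotomy from its proof (the analogue of Remark~3.6) do not apply, and your argument stalls precisely where something must happen. Moreover, even granting $g_i(W)\in\{2,3\}$, the resulting decomposition circle need not be honest (e.g.\ near Markov stabilizations); the paper resolves this by destabilizing as much as possible and then showing $g_i(W')\geq 4$, a step your proposal omits. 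There is also a small self-defeating aspect in the proposed invariant: once the fixed arc has been pushed all the way into a single $T(2,\pm k)$ column, that open book is strictly right- or left-veering and therefore has \emph{no} fixed essential arcs at all, so the iteration cannot terminate in ``direct inspection shows the arc is diagrammatic'' — instead it yields the contradiction that the arc did not exist, which is just the contrapositive. In short: your strengthened invariant is unnecessary when the sums are essential and unavailable when they are not, so the contrapositive organization (plus the destabilization step and the reduction to non-split words) is not cosmetic but is what closes the argument.
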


Our proof, much like Cromwell's proof for positive braids, uses the fiber structure. However, rather than drawing upon local pictures and fundamental group calculations, we prove an open book result (see next subsection and \Cref{sec:murasugi_sum}) and require from the knot theory side only a description of the fiber surface of a non-split homogeneous braid as an iterative Murasugi sum (see \Cref{sec:sketch} for a sketch, and see \Cref{sec:braids} for a detailed account).

\subsection*{A primeness criterion for open books}
Let $(\Sigma,\phi)$ be an open book---a pair consisting of an oriented compact surface $\Sigma$ with non-empty boundary, called the \emph{page}, and an orientation preserving self-diffeomorphism~$\phi$, called the \emph{monodromy}. 
For the rest of the introduction and~\Cref{sec:sketch}, we assume that the oriented $3$--manifold $M_\phi$ associated with the open book is irreducible (e.g.~$M_\phi\cong S^3$) for sake of exposition.
Here, the \emph{associated $3$--manifold} $M_\phi$ is the result of building the mapping torus of $\phi$ and collapsing its boundary components to a union of circles, called the \emph{binding}.
The binding, understood as a link in~$M_\phi$, is \emph{prime} if the open book has no fixed essential arcs, where an arc $a$---a properly embedded closed interval---in $\Sigma$ is said to be \emph{essential} if it cannot be isotoped relative boundary into~$\partial\Sigma$,
and is said to be \emph{fixed} if $\phi(a)$ and $a$ are isotopic relative boundary.

The so inclined reader might hope that primeness is preserved under stabilization as follows.
Assume $(\Sigma,\phi)$ has no fixed essential arcs and let $(\widetilde{\Sigma},\widetilde{\phi})$ be a stabilization (also known as Hopf plumbing) along some essential arc.  Then, surely, $(\widetilde{\Sigma},\widetilde{\phi})$ also has no fixed essential arc; in other words its binding is again a prime link in $M_{\widetilde{\phi}}\cong M_\phi$?!
This surmise is e.g.~true if $\phi$ is positive and the stabilization is positive, which was a key input in Ito's reproof of Cromwell's result for positive braids~\cite[Appendix~A]{Ito_22}; see also \Cref{rmk:connectiontoItosproof}.
However, it is \emph{false} in general, as the following example shows.%
\begin{Example}\label{ex:counterexample}
In this example we consider open books with associated $3$--manifold diffeomorphic to~$S^3$, which we describe by their bindings: fibered links in~$S^3$. Consider the prime fibered knot in~$S^3$
arising as closure of the homogeneous 3--stranded braid $\sigma_1^{-2} \sigma_2 \sigma_1^{-1} \sigma_2^2$, which is $6_3$ in the knot tables; see top left of \Cref{fig:6_3}.
 \begin{figure}[b]
    \centering
    \includegraphics[width=0.9\linewidth]{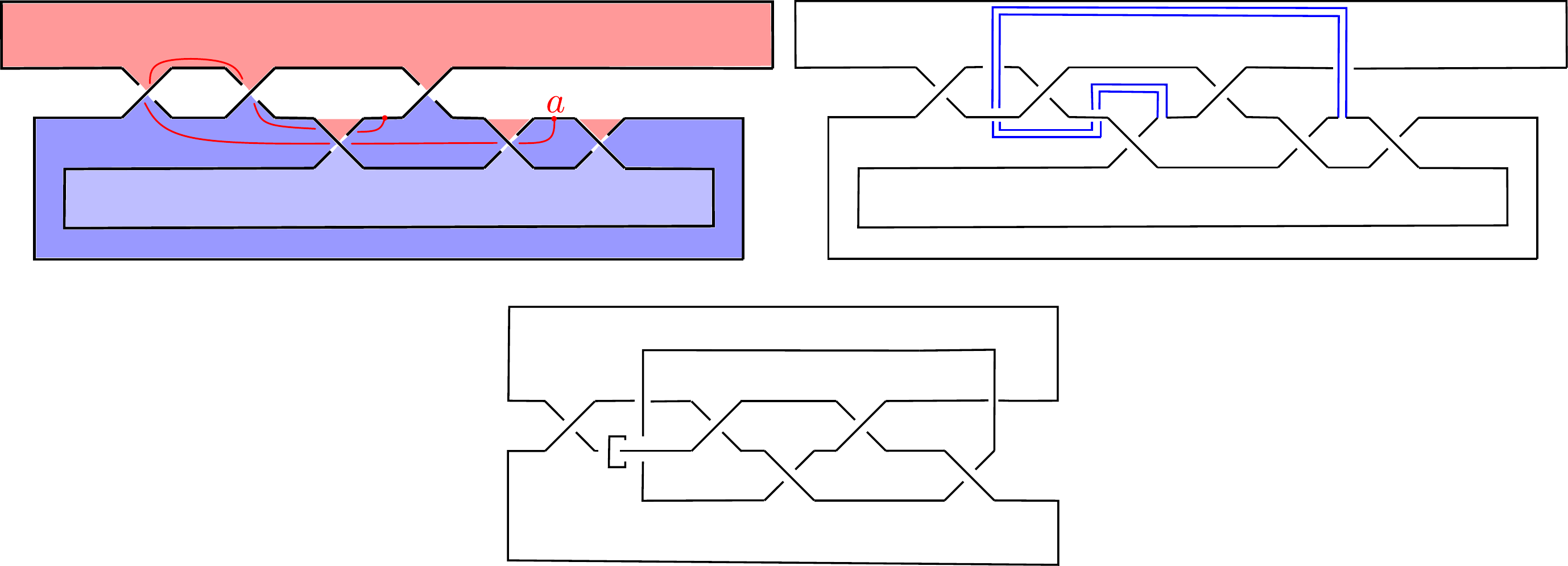}
    \caption{Top left: the knot~$6_3$ and the stabilizing arc $a$ (red).
    \newline
    Top right: the result of the stabilization (band being plumbed in blue).\newline
    Bottom: the result, after an isotopy, is seen to be a non-prime link: the connected sum of the prime fibered knot $8_{20}$ and a Hopf link.}
    \label{fig:6_3}
    \end{figure}
Stabilization along the essential arc $a$ depicted on the top left of \Cref{fig:6_3} results in a two component fibered link on the top right, which is not prime: it is the connected sum of a positive Hopf link and the prime fibered knot~$8_{20}$.

\end{Example}

The main work in our proof to establish \Cref{thm:main} lies in providing a criterion that assures that indeed
$(\widetilde{\Sigma},\widetilde{\phi})$ has no fixed essential arc. In fact, the criterion more generally works for Murasugi sums, a generalization of stabilization (which we recall below), and in the proof of \Cref{thm:main} we exploit the natural Murasugi sum structure of the closure of homogeneous braids.

Fixing open books $(\Sigma_1, \phi_1)$ and~$(\Sigma_2, \phi_2)$, and identifying some even-sided polygon $P$ with subsets of $\Sigma_1$ and~$\Sigma_2$, known as the \emph{summing region}, the \emph{Murasugi sum} of $(\Sigma_1, \phi_1)$ and $(\Sigma_2, \phi_2)$ along $P$ is the following open book. The page $\Sigma : = \Sigma_1 \#_{P} \Sigma_2$ is the result of gluing $\Sigma_2$ to $\Sigma_1$ along~$P$, and we take $\phi\coloneqq \phi_2\circ\phi_1\colon \Sigma\to \Sigma$ to be the monodromy, where $\phi_i\colon\Sigma\to \Sigma$ are the extensions of  $\phi_2\colon \Sigma_2\to\Sigma_2$ and $\phi_1\colon \Sigma_1\to\Sigma_1$ to $\Sigma$ by the identity. See \Cref{def:murasugissum} for a more precise definition.
The summing region is called \emph{essential} if the sides of $P$ are essential arcs in~$\Sigma$.
Murasugi sums generalize the notion of (positive) stabilization of an open book $(\Sigma_1,\phi_1)$ along an arc $a$ in~$\Sigma_1$.
To see this, take
$P$ to be a square and
$(\Sigma_2,\phi_2)$ to be an annulus with a (positive) Dehn twist along the core;
then, glue $\Sigma_1$ and $\Sigma_2$ by identifying $P$ with a product neighborhood of $a$ in~$\Sigma_1$,
and with a product neighborhood of the cocore of the annulus in~$\Sigma_2$.

\begin{restatable}%
{crit}{criterion}
\label{prop:nofixedarccriterion}
Let $(\Sigma_1, \phi_1)$ and $(\Sigma_2, \phi_2)$ be open books and let $P$ be an essential summing region.
 If all fixed essential arcs of the open book $(\Sigma_1, \phi_1)$ intersect~$P$, $(\Sigma_2, \phi_2)$ is strictly right-veering, and every arc in $P\subset \Sigma_1$ veers to the left or is fixed by~$\phi_1$, then the Murasugi sum of $(\Sigma_1, \phi_1)$ and $(\Sigma_2, \phi_2)$ along $P$ has no fixed essential arc.
\end{restatable}

With this criterion, the proof of \Cref{thm:main} reduces to showing that a non-split homogeneous braid closure with a diagram that features no decomposition circles arises as iterative Murasugi sums that satisfy the criterion. Said iterative Murasugi sum is rather natural from the braiding structure and is given by grouping together maximal positive and negative non-split subbraids. We provide a primer of this primeness argument in \Cref{sec:sketch}; the full argument is given in \Cref{sec:braids}.

\subsection*{Primeness of tree-like Murasugi sums}

As an illustration of \Cref{prop:nofixedarccriterion} we obtain that so-called fibered arborescent links---links arising from iterative plumbing of Hopf bands guided by a plane tree;
see~\cite[Ch.~12]{bs},~\cite{MR2165205}---are all prime. We see this as an illustration rather than a new result, and note that primeness, in fact, hyperbolicity is known for all but three exceptional families of arborescent knots; compare with Futer and Gu\'{e}ritaud's work~\cite{Futer_2008}.
Given that \Cref{prop:nofixedarccriterion} does not just work for~$S^3$, but is stated in terms of open books with any associated $3$--manifold, we take the opportunity to introduce a notion of a \emph{tree of open books}, essentially encoding the Murasugi summing of strictly right-veering and strictly left-veering open books along disjoint essential summing regions (see \Cref{sec:trees}). We then show, using the criterion, that the resulting open book has no fixed essential arcs, and is thus, in particular, prime (see \Cref{thm:prime_trees}). This simultaneously generalizes the key idea in the proofs of \Cref{thm:main} and of primeness of fibered arborescent links.

\subsection*{Trefoil plumbings do not preserve primeness, while figure-eight knot plumbings do.}

We discussed that the property of an open book having no fixed essential arcs is not necessarily preserved by essential Hopf plumbing; see \Cref{ex:counterexample}. It turns out that the same is true for essential trefoil plumbings, which we understand to mean the following. 
Starting with an open book~$(\Sigma,\phi)$, plumb a positive Hopf band along an essential arc $a$ in~$\Sigma$, and to the resulting open book $(\widetilde{\Sigma},\widetilde{\phi})$ plumb a second positive Hopf band along an essential arc that is disjoint from~$\Sigma\subset\widetilde{\Sigma}$.
As the second arc is unique up to isotopy, the resulting open book only depends on the choice of~$a$, and we call it the \emph{positive trefoil plumbing} of $({\Sigma},{\phi})$ along~$a$.
\begin{proposition}\label{prop:nonprimnessoftrefoilplumbing} There exists an open book $(\Sigma,\phi)$ with connected~$\partial\Sigma$, without fixed essential arcs, and with an essential arc $a\subset \Sigma$ such that positive trefoil plumbing of $(\Sigma,\phi)$ along $a$ has a fixed essential separating arc.
\end{proposition}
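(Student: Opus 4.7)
This is an existence statement, so the plan is to construct an explicit example. I would take $(\Sigma,\phi)$ to be a fiber open book in $S^3$ whose binding is a prime fibered knot $K$ with non-annulus fiber; such an open book automatically has no fixed essential arcs, by the equivalence between primeness of the binding and absence of fixed essential arcs used throughout the paper. By \Cref{thm:main}, the closure of any non-split homogeneous braid whose diagram has no decomposition circles provides such an example. The task then reduces to identifying an essential arc $a \subset \Sigma$ for which the positive trefoil plumbing along $a$ yields an open book with a fixed essential separating arc.

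To engineer such an arc, I would aim for the positive trefoil plumbing to output an open book visibly expressible as a Murasugi sum along a rectangle of two non-trivial open books; one edge of this rectangle then provides the desired fixed essential separating arc. Concretely, I would start from an explicit homogeneous braid closure $K$ and search for an arc $a$ such that the two new positive Hopf bands, together with an adjacent portion of $\Sigma$, combine to detach a non-trivial summand from the rest of the surface. The geometry of \Cref{ex:counterexample}, where a single Hopf plumbing on $6_3$ already destroyed primeness, is a natural point of departure: one seeks arcs on neighboring homogeneous braid closures where two positive Hopf plumbings (precisely the two composing a positive trefoil) conspire to play the analogous destabilizing role.

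The main obstacle is a combinatorial tension between the two requirements: no arc in $\Sigma$ is fixed by $\phi$, yet after composing with the two extra positive Dehn twists coming from the trefoil plumbing, some essential separating arc $a'\subset\widetilde\Sigma$ must become fixed by $\widetilde\phi$. This forces $a'$ to lie in a region where the newly added twists precisely cancel the non-trivial action of $\phi$ on its pre-image, and identifying such a cancellation configuration is the creative step. Once a candidate is found, verification reduces to pictorial checks: (i) draw $\Sigma$, $a$, and $\widetilde\Sigma$; (ii) exhibit $a'$ essential and separating in $\widetilde\Sigma$; (iii) track the action of $\widetilde\phi$ on $a'$ via isotopies and confirm it is isotopic rel endpoints to $a'$. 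The Murasugi sum formalism developed in the rest of the paper should reduce these steps to routine diagram chasing once the example is in hand.
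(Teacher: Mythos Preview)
Your plan has a genuine gap: you restrict your search to open books with associated $3$--manifold $S^3$ (prime fibered knots in $S^3$, homogeneous braid closures), but the paper explicitly does \emph{not} know whether such an example exists in $S^3$. Immediately after \Cref{prop:nonprimnessoftrefoilplumbing} the authors write that they ``do not know whether an open book as in \Cref{prop:nonprimnessoftrefoilplumbing} can be found such that the associated $3$--manifold is $S^3$'', and pose this as an open question. So the very first line of your plan already commits you to answering a question the paper leaves unresolved; there is no reason to expect the search you describe to terminate.

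The paper's construction is quite different in spirit. It starts not from a fibered knot in $S^3$ but from an ad hoc open book $(\Sigma_P,P)$: $\Sigma_P$ is an annulus with two one-holed tori surgered in at two marked points, and $P$ is essentially a half point-pushing map that swaps the two marked points (hence swaps the two tori). The arcs $\alpha,\beta_1,\beta_2$ and their $P$--images are drawn explicitly, the plumbing arc $a$ is chosen, and the fixed separating arc $\gamma$ in the trefoil-plumbed open book is exhibited by direct picture verification that $\phi_1(\gamma)=\phi_2^{-1}(\gamma)$. Only \emph{after} this core construction does the paper address the remaining requirements of the statement: connected boundary is arranged by further Hopf plumbings (using \Cref{prop:nofixedarccriterion} and \Cref{prop:primnessoffigure8plumbing} to check no fixed arcs are introduced), and the absence of fixed essential arcs in the starting open book is arranged by modifying $(\Sigma_P,P)$ via figure-eight plumbings along $P(\alpha),P(\beta_1),P(\beta_2)$ and invoking \Cref{rem:fixarcsinfig8}. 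None of this lives in $S^3$, and the key creative step---the swapping monodromy $P$ that makes the two positive twists of the trefoil plumbing ``undo'' the motion of the arcs---has no analogue in your proposal.
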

In other words,
there exists a prime fibered knot in a $3$--manifold $M_\phi$ that features an essential trefoil plumbing for which the resulting knot in $M_\phi$ is not prime.

Similarly, given an open book $(\Sigma,\phi)$ and an arc $a$ in~$\Sigma$, we call the same construction as above, but replacing the second positive Hopf band by a negative Hopf band, the \emph{figure-eight plumbing} of $({\Sigma},{\phi})$ along~$a$.
In contrast to trefoil plumbings, figure-eight plumbings preserve the property of having no fixed essential arcs, as a rather immediate application of \Cref{prop:nofixedarccriterion} shows.

\begin{proposition}
\label{prop:primnessoffigure8plumbing} Let $(\Sigma,\phi)$ be an open book and let $a$ be an essential arc in~$\Sigma$. If all fixed essential arcs of $(\Sigma,\phi)$ intersect $a$ (e.g.~if~$(\Sigma,\phi)$ has no fixed essential arcs), then the figure-eight plumbing of $(\Sigma,\phi)$ along $a$ has no fixed essential arc, and, in particular, is prime.
\end{proposition}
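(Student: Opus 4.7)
The plan is to realize the figure-eight plumbing as a single Murasugi sum with the fiber open book of the figure-eight knot, and to apply Criterion~\ref{prop:nofixedarccriterion}. Concretely, I would first observe that the figure-eight plumbing of $(\Sigma,\phi)$ along $a$ is the Murasugi sum of $(\Sigma,\phi)$ with the fiber open book $(F,\phi_F)$ of the figure-eight knot---where $F$ is a once-punctured torus and $\phi_F$ is its famous pseudo-Anosov monodromy---along a square summing region $P$ identifying a neighborhood of $a$ in $\Sigma$ with a neighborhood of a cocore of one of the two Hopf bands in the standard decomposition $F=H_+\cup H_-$. This identification is an unpacking of the definition: plumbing a positive Hopf band along $a$ and a negative Hopf band along an arc disjoint from $\Sigma$ combines to gluing the full figure-eight fiber along the original square region.

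Next, I would verify that $(F,\phi_F)$ is strictly right-veering. The monodromy $\phi_F=\tau_\alpha\tau_\beta^{-1}$ is pseudo-Anosov, and hence has no fixed essential arc. Right-veering can be checked directly from the action of $\phi_F$ on essential arcs near $\partial F$, or invoked from the fact that the figure-eight fiber open book supports the standard tight contact structure on $S^3$, together with the Honda--Kazez--Mati\'c characterization of tight contact structures via right-veering monodromy.

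With this in hand, two of the three hypotheses of Criterion~\ref{prop:nofixedarccriterion} are immediate: fixed essential arcs of $(\Sigma,\phi)$ all meet $P$ (since they meet $a\subset P$), and $(F,\phi_F)$ is strictly right-veering. The main obstacle is then the remaining hypothesis, that every arc in $P\cap\Sigma$ veers left or is fixed by $\phi$: in a square summing region around $a$ the relevant arcs are parallel to $a$, whose veering is not controlled by the hypothesis. I expect to handle this by reformulating the figure-eight plumbing as two sequential Hopf plumbings and applying the orientation-reversed mirror of Criterion~\ref{prop:nofixedarccriterion} to the negative Hopf plumbing, which occurs along an arc $b\subset\Sigma'\setminus\Sigma$ inside the new Hopf band region of the intermediate open book $(\Sigma',\phi')$ with $\phi'=\tau_\gamma\circ\phi$. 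For this second step, the required ``arcs veer right under $\phi'$'' condition is automatic, because $\phi$ is the identity on $b$ and $\tau_\gamma$ twists $b$ positively around the core $\gamma$ of the first Hopf band. Combining the two plumbings, no fixed essential arc survives, and primeness of the binding follows by the discussion in the introduction.
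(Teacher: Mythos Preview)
Your two-step approach---first plumb the positive Hopf band to obtain $(\Sigma',\phi')$, then apply the mirrored Criterion~\ref{prop:nofixedarccriterion} to the negative Hopf plumbing along the cocore $b$---is exactly the paper's argument, and your check that arcs in the second plumbing region veer right under $\phi'=\tau_\gamma\circ\phi$ (because $\phi$ is the identity there and $\tau_\gamma$ twists the cocore to the right) is correct and matches the paper's reasoning.

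There is, however, one genuine gap: you do not verify the remaining hypothesis of the criterion, namely that every fixed essential arc of the \emph{intermediate} open book $(\Sigma',\phi')$ intersects the second plumbing region~$P'$. This is not vacuous, since $(\Sigma',\phi')$ can acquire fixed essential arcs even when $(\Sigma,\phi)$ has none (Example~\ref{ex:counterexample} is precisely such an instance), and the hypothesis of the proposition constrains only the fixed arcs of $(\Sigma,\phi)$, not those of $(\Sigma',\phi')$. The paper fills this in by observing that a fixed essential arc of $(\Sigma',\phi')$ disjoint from $P'$ can be isotoped into $\Sigma\subset\Sigma'$ and is then seen to be a fixed essential arc of $(\Sigma,\phi)$ disjoint from~$a$, contradicting the hypothesis. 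Without this step the criterion cannot be invoked, so you should add it.

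(Your opening attempt via a single Murasugi sum with the figure-eight fiber is correctly diagnosed as obstructed by the uncontrolled veering of $a$ under~$\phi$, and can simply be dropped.)
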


The authors were surprised by this dichotomy between trefoil plumbings and figure-eight plumbings. However, we do not know whether an open book as in \Cref{prop:nonprimnessoftrefoilplumbing} can be found such that the associated $3$--manifold is~$S^3$.

\begin{question}
Does there exist a prime fibered knot $K$ in $S^3$ to which one can plumb a trefoil along an essential arc in its fiber surface such that the resulting knot in $S^3$ is not prime?
\end{question}

\subsection*{Structure of the paper}
In~\Cref{sec:sketch}, we sketch a proof of~\Cref{thm:main} using \Cref{prop:nofixedarccriterion}.
In \Cref{sec:murasugi_sum}, we give detailed definitions concerning open books, Murasugi sums, veeringness, and prove \cref{prop:nofixedarccriterion}.
In \Cref{sec:trees}, we describe an iterative construction of Murasugi summing guided by a tree of open books, and we establish \Cref{thm:prime_trees}, which states that the resulting open book is prime, whenever all involved open books are strictly veering and the summing regions are essential. As an application, we discuss primeness of arborescent fibered links and prove \Cref{prop:primnessoffigure8plumbing}.
In \Cref{sec:braids}, we give an introduction to homogeneous braids and decomposition circles. Then we prove several lemmas on the Murasugi sum structure of homogeneous braids, which we use to give a detailed proof of \Cref{thm:main} by inductively showing that prime diagrams of homogeneous braids are obtained by repeated Murasugi sums satisfying the criterion. We do so using the language of trees of open books developed in \Cref{sec:trees}.
Finally, in \Cref{sec:trefoilplumbing}, we prove \Cref{prop:nonprimnessoftrefoilplumbing}.

\subsection*{Acknowledgments}
We would like to thank Mark Pencovitch for pointing out Stoimenow's example. PF and MOR gratefully acknowledge financial support by the SNSF Grant 181199.

\section{Outline of the proof of visual primeness of homogeneous braids}\label{sec:sketch}

We provide a sketch of the proof of \Cref{thm:main}, relying on what we hope the reader will find instructive examples, namely the $5$--stranded homogeneous braids  \[\beta_{\mathrm{prime}}
=\sigma_3\sigma_4^{-1}\sigma_1^{-2}\sigma_3^2\sigma_2^{2}\sigma_4^{-1}\sigma_1^{-1}\sigma_3^{2}\sigma_2\sigma_4^{-1}
\quad\text{and}\quad
\beta_{\mathrm{comp}}=\sigma_3^{5}\sigma_1^{-3}\sigma_2^{3}\sigma_4^{-3},\]
and invoking \Cref{prop:nofixedarccriterion}.
In this sketch we readily switch between open books $(\Sigma,\phi)$ for which the associated $3$--manifolds are diffeomorphic to $S^3$ and their associated fiber surfaces in~$S^3$.

\subsection*{Step 1: Diagram and fiber surface associated with a homogeneous braid}
Let $D(\beta)$ be the diagram arising from a homogeneous braid $\beta$ on $n$ strands; see left-hand side of \Cref{fig:braidanddiagr} for an illustration for $D(\beta_{\mathrm{prime}})$ and~$D(\beta_{\mathrm{comp}})$. 
\begin{figure}[ht]
    \centering
    \includegraphics[width=\linewidth]{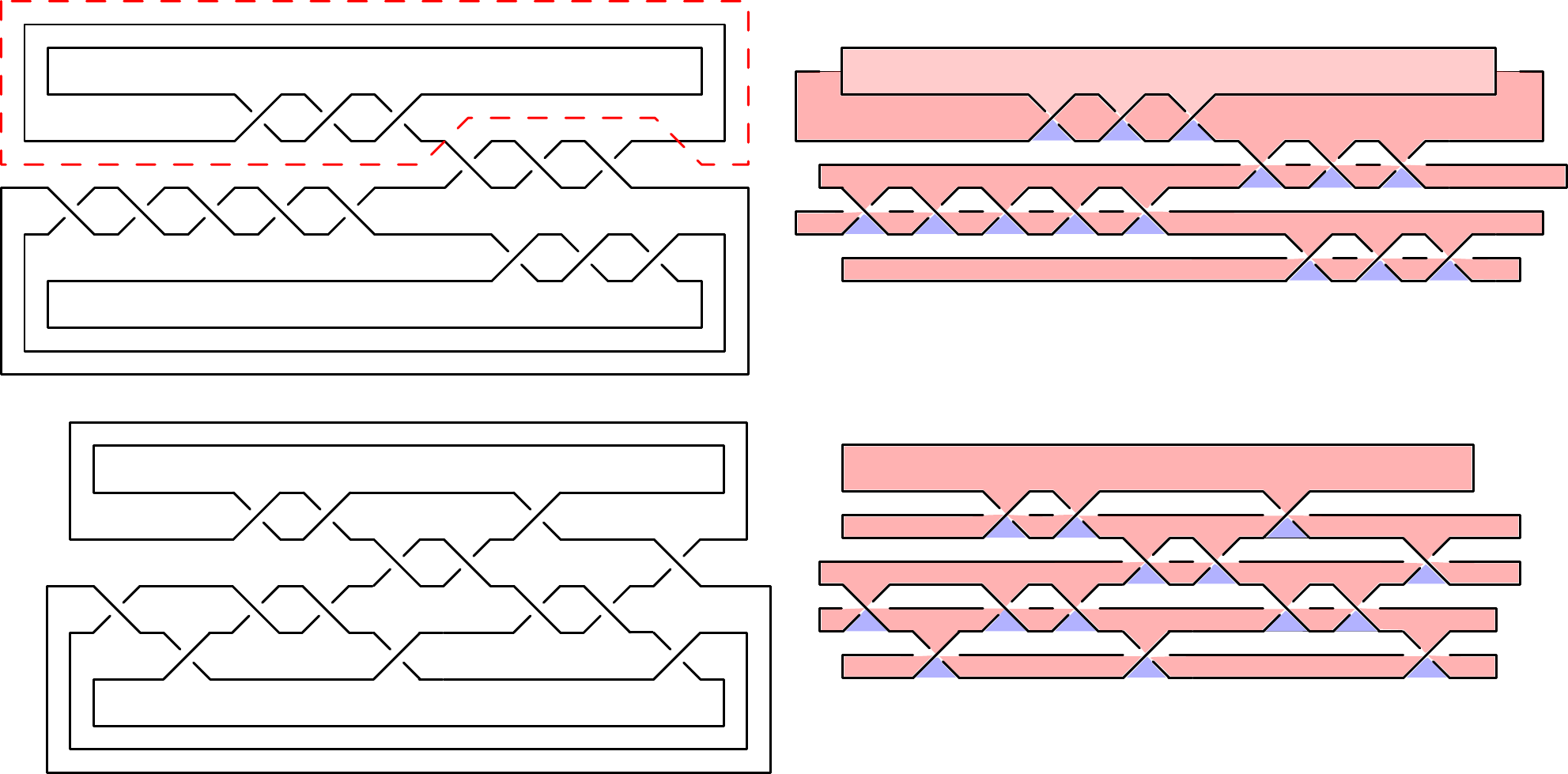}
    \caption{Left: $D(\beta_{\mathrm{comp}})$ (top) with a decomposition circle (dotted, red) and $D(\beta_{\mathrm{prime}})$ (bottom). Right: Corresponding Seifert surfaces.}
    \label{fig:braidanddiagr}
    \end{figure}
The diagram $D(\beta_{\mathrm{prime}})$ does not feature any decomposition circles, while $D(\beta_{\mathrm{comp}})$ does.

The Seifert surface $\Sigma(\beta)$ obtained from Seifert's algorithm applied to $D(\beta)$ consists of a disk for each Seifert circle, and a band for each generator~$\sigma_i^{\pm}$; see right-hand side of \Cref{fig:braidanddiagr} for an illustration for $\beta\in\{\beta_{\mathrm{comp}},\beta_{\mathrm{prime}}\}$.

If the homogeneous braid $\beta$ features each generator $\sigma_i^{\pm}$ at least once, then $\Sigma(\beta)$ is a fiber surface. This is the case for
$\beta_{\mathrm{prime}}$ and~$\beta_{\mathrm{comp}}$. In fact, it turns out that it suffices to consider homogeneous braids~$\beta$ that are given by braid words in which each generator $\sigma_i^{\pm}$ appears at least twice, which we assume in the rest of this sketch.

\subsection*{Step 2: Decomposition as iterative Murasugi sum}

The Seifert surface~$\Sigma(\beta)$ arises as iterative Murasugi sums of fiber surfaces of positive or negative braids (those only featuring  generators of one sign). We describe two such iterative procedures.

Firstly, $\Sigma(\beta)$ is the iterative Murasugi summing of $n-1$ open books with pages that are the fiber surfaces $\Sigma(2,k_1),\dots \Sigma(2,k_{n-1})$ of the torus links~$T(2,k_i)$, where~$k_i\in\mathbb{Z}\setminus\{-1,0,1\}$.
In the case of our two example braids, we find $k_1=k_4=-3$, $k_2=3$, and~$k_3=5$; see \Cref{fig:murasugisforbetaprandbetadec}.
\begin{figure}[ht]
    \centering
    \includegraphics[width=1\linewidth]{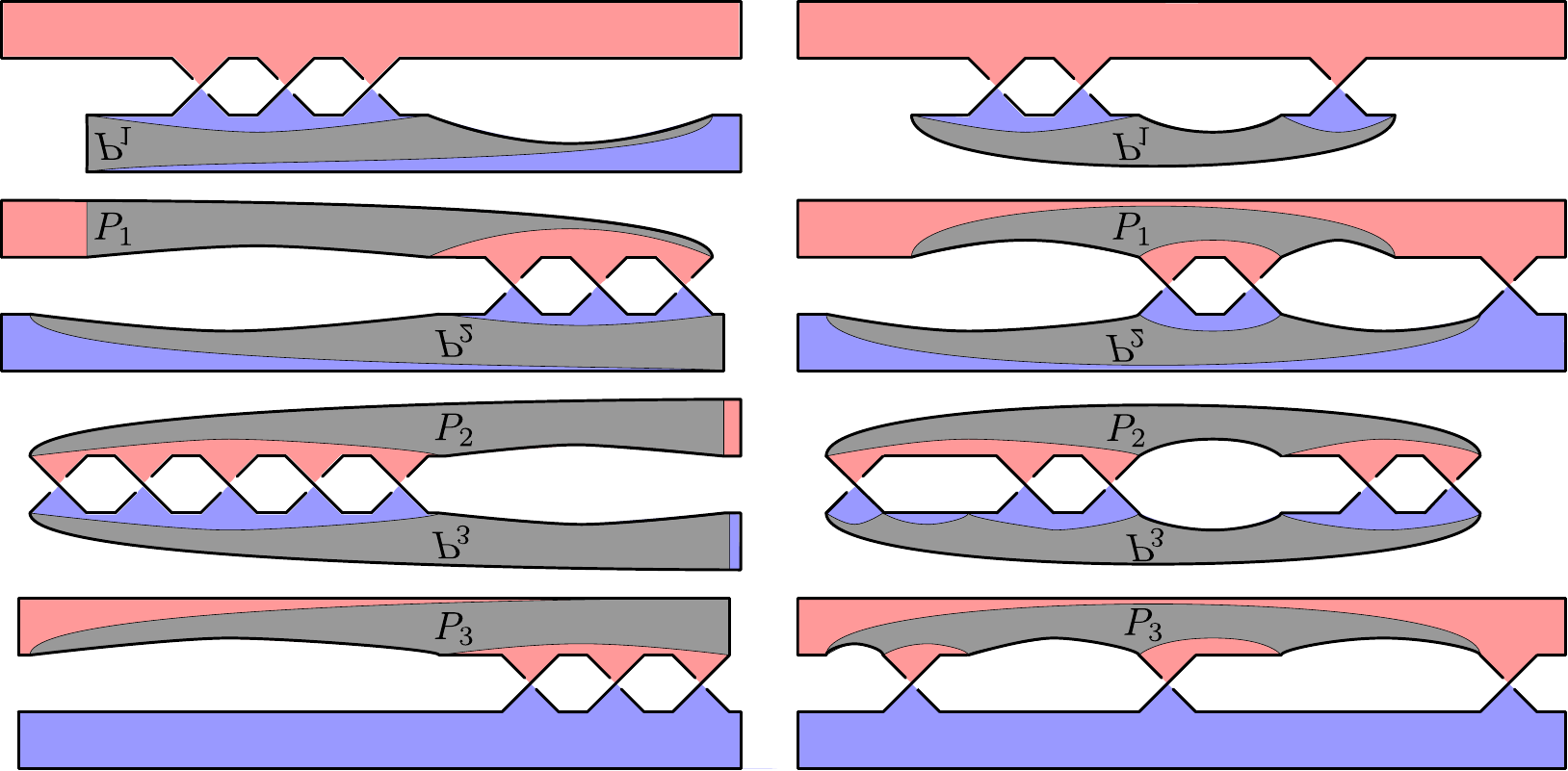}
    \caption{The fiber surfaces $\Sigma_1=F(2,-3)$, $\Sigma_2=F(2,3)$, $\Sigma_3=F(2,5)$, $\Sigma_4=F(2,-3)$ (from top to bottom), together with summing regions $P_1$, $P_2$, and $P_3$ such that Murasugi summing $\Sigma_2$ to $\Sigma_1$ using~$P_1$, summing $\Sigma_3$ to the result via~$P_2$, and summing $\Sigma_4$ to that result via $P_3$ yields $\Sigma(\beta_{\mathrm{comp}})$ (left) and $\Sigma(\beta_{\mathrm{prime}})$ (right).}
\label{fig:murasugisforbetaprandbetadec}
    \end{figure}
It turns out that this iterated Murasugi sum description can be chosen to be an essential Murasugi sum in every step if and only if $D(\beta)$ has no decomposition circles; see \Cref{lemma:braid_murasugi}.
The summing regions for $\Sigma(\beta_{\mathrm{prime}})$ indicated in 
\Cref{fig:murasugisforbetaprandbetadec} are essential, while those for $\Sigma(\beta_{\mathrm{comp}})$ are not.

Secondly, we describe a summing procedure that uses the same summing regions, but first iteratively sums together $\Sigma(2,k_{i})$, $\Sigma(2,k_{i+1})$,$\dots$, $\Sigma(2,k_{i+l})$, where $i$ is chosen minimally and $l$ is chosen maximally such that $k_{i},\dots,k_{i+l}$ have the same sign, yielding open books given by fiber surfaces $\Sigma_1,\dots,\Sigma_m$ for some~$m\leq n-1$. If the summing regions that are used define essential Murasugi sums, then these open books are all either strictly right-veering or strictly left-veering since they are iterative essential Murasugi sums of open books that all are strictly right-veering or all are strictly left-veering; see \Cref{lemma:prime_trees}. In fact, $\Sigma_1,\dots \Sigma_m$ alternate between being strictly right-veering and strictly left-veering.
In the case of~$\beta_{\mathrm{prime}}$, this yields the $3$ fiber surfaces $\Sigma_1$, $\Sigma_2$, and $\Sigma_3$ depicted in \Cref{fig:primsecMurasugisum}. 
\begin{figure}[ht]
    \centering
    \includegraphics[width=0.8\linewidth]{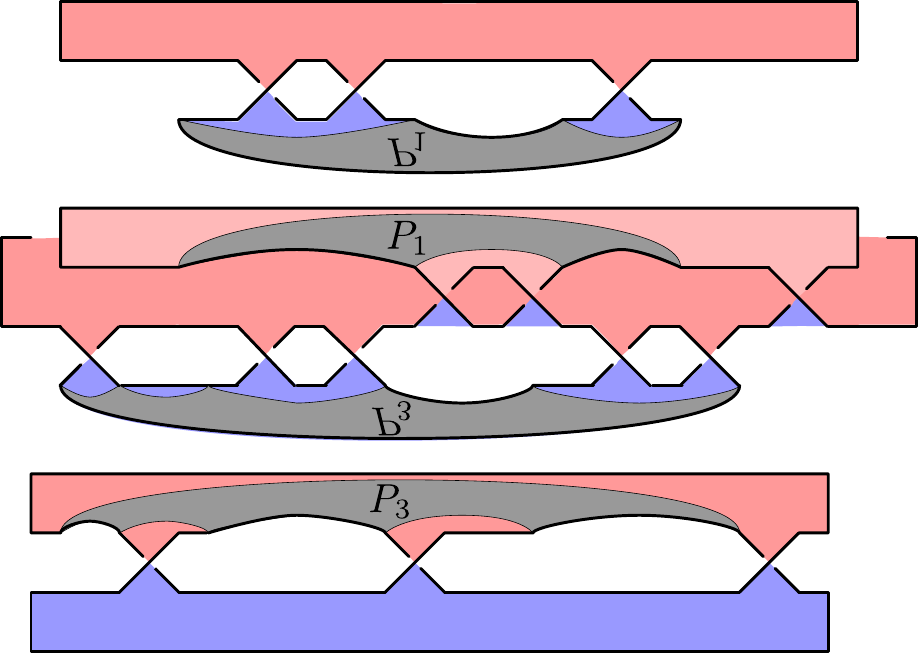}
    \caption{The fiber surfaces $\Sigma_1=F(2,-3)$ (top), $\Sigma_2$ (middle), $\Sigma_3=F(2,-3)$ (bottom), where $\Sigma_2$ arose by Murasugi summing $F(2,5)$ to $F(2,3)$ using the summing region $P_2$ from \Cref{fig:murasugisforbetaprandbetadec}.}
\label{fig:primsecMurasugisum}
    \end{figure}
The corresponding open books are strictly left-veering (top), strictly right-veering (middle), and strictly left-veering (bottom).
The remaining summing regions can now be used to
recover $\Sigma(\beta)$ by iteratively Murasugi summing~$\Sigma_1,\dots,\Sigma_m$.
\subsection*{Step 3: Primeness of the closure of \texorpdfstring{$\beta$}{β}}
Assume $\beta$ is a homogeneous braid such that $D(\beta)$ has no decomposition circles. 
To establish \Cref{thm:main}, we show that the closure of $\beta$---the fibered link $L$ described by $D(\beta)$---is prime.
To see this, we consider $\Sigma(\beta)$ as the iterated Murasugi sum of~$\Sigma_1,\dots,\Sigma_m$.
The $\Sigma_i$ correspond to strictly left-veering or strictly right-veering open books, the iterative Murasugi sums are essential (as there are no decomposition circles), and the veeringness of consecutive open books alternates.
Thus, $\Sigma(\beta)$ arises as an iterative Murasugi sum for which \Cref{prop:nofixedarccriterion} applies in each step.
In case of~$\beta_{\mathrm{prime}}$, we take the Murasugi sum of the open book corresponding to $\Sigma_1$ with the open book corresponding to $\Sigma_2$ using~$P_1$.  Since the open book corresponding to $\Sigma_1$ is strictly left-veering, it has no fixed essential arcs and all arcs in $P_1\subset \Sigma_1$ are mapped to the left by the monodromy; hence, using that $\Sigma_2$ is strictly right-veering, $\Sigma_1\#_{P_1}\Sigma_2$ has no fixed essential arcs by \Cref{prop:nofixedarccriterion}. Noting that $P_3\subset (\Sigma_1\#_{P_1}\Sigma_2)$
lies in~$\Sigma_2$, we have that all arcs in $P_3$ are mapped to the right by the monodromy of~$\Sigma_1\#_{P_1}\Sigma_2$, hence $\Sigma(\beta_{\mathrm{prime}})=(\Sigma_1\#_{P_1}\Sigma_2)\#_{P_3}\Sigma_3$ has no fixed essential arcs by \Cref{prop:nofixedarccriterion} (applied with the words left and right exchanged).

All in all, the monodromy of $\Sigma(\beta)$ has no fixed essential arcs. Equivalently its binding---considered as the fibered link $L=\partial \Sigma(\beta)$---is prime, as desired.

\section{Fixed arcs in Murasugi sums of open books} \label{sec:murasugi_sum}
\subsection{Preliminaries on open books, fixed arcs and primeness}
Recall that an \emph{open book} is a pair~$(\Sigma, \phi)$, where~$\Sigma$, called the \emph{page}, is a compact oriented surface with non-empty boundary, and~$\phi$, called the \emph{monodromy}, is an orientation-preserving self-diffeomorphism of~$\Sigma$ that fixes $\partial\Sigma$ pointwise.

Given an open book~$(\Sigma, \phi)$, we say an arc~$\gamma$, i.e.~an oriented properly embedded closed interval, is \emph{fixed%
} if $\phi(\gamma)$ is isotopic to $\gamma$ relative to its endpoints. We call a properly embedded arc $\gamma$ in $\Sigma$ \emph{essential}, if it is not boundary parallel; that is, $\gamma$ is not isotopic relative endpoints to an arc in~$\partial \Sigma$.
Note that boundary parallel arcs are always fixed. In this text we are concerned with operations such as stabilization and Murasugi sum and when they preserve the property of having no fixed essential arcs. This relates to primeness as follows.

We call an open book $(\Sigma,\phi)$ \emph{prime} if there does not exist an separating essential arc that is fixed by~$\phi$.
Equivalently, $(\Sigma,\phi)$ is prime if there does not exist an embedded $2$--sphere in the $3$--manifold associated with the open book that transversely intersects the page in a separating essential arc. 
Here, the $3$--\emph{manifold $M_\phi$ associated with the open book} $(\Sigma,\phi)$ is the closed oriented $3$--manifold resulting as a quotient of the mapping torus of $\phi$ as follows: collapse the boundary tori of the mapping torus $\Sigma\times [0,1] / (x,0) \sim (\phi(x),1)$ to circles---called the \emph{binding}---by collapsing the $[0,1]$--direction to a point.
The page is considered as a surface in~$M_\phi$, a.k.a. a fiber surface for the binding, via the embedding of $\Sigma$ induced by~$x\mapsto (x,0)$.
In fact, a fixed essential arc is separating exactly if the corresponding $2$--sphere separates the $3$--manifold. In this text we focus on the question of whether an open book has no fixed essential arcs, as this is what we can naturally track from the point of view of open books, rather than separating arcs. While this is a priori a stronger condition than primeness, note that in case the resulting $3$--manifold $M_\phi$ does not feature an embedding of a $2$--sphere that is non-separating, which is equivalent to $M$ not having a $S^1\times S^2$ summand in its prime decomposition, the two notions agree.

Now, let $L$ be a link in an oriented closed $3$--manifold~$M$. We say $L$ is \emph{fibered} if its complement fibers over the circle, that is, there exists a fibration $ \pi\colon M \setminus L \rightarrow S^1$ where the fibers $F_{\theta} = \pi^{-1}(\theta)$ are the interior of diffeomorphic compact surfaces with common boundary~$L$. 
Equivalently, there exists an open book such that $M_\phi$ is diffeomorphic to $M$ by an orientation preserving diffeomorphism that orientation preservingly maps the binding to~$L$. In this case, the page can be taken to be a surface whose interior is diffeomorphic to the fiber~$F_{\theta}$, and the monodromy is the (equivalence class of the) return map of the fibration.
Here, equivalence of open books is the equivalence relation induced by boundary preserving isotopy and conjugation: open books $(\Sigma_1,\phi_1)$ and  $(\Sigma_2,\phi_2)$ are \emph{conjugate} if there exist an orientation preserving diffeomorphism $f\colon \Sigma_1\to\Sigma_2$ such that $\phi_2\circ f=f\circ\phi_1$. Associating to an open book $(\Sigma,\phi)$ the $3$--manifold $M_\phi$  and its binding induces a bijection between open books up to equivalence and fibered links $L$ in $3$--manifolds $M$ up to orientation preserving diffeomorphisms of pairs~$(M,L)$. 
We note that a fibered link in $S^3$ is prime (as defined in knot theory; compare \Cref{sec:braids}) if and only if the corresponding open book is prime, justifying the above definition of primeness for open books.

\subsection{Murasugi sums and representing fixed essential arcs therein}
Originally, the Murasugi sum was understood as an operation on surfaces embedded in $S^3$~\cite{Murasugi_58,Murasugi_63}. However, as Gabai proved that it preserves fiberedness~\cite%
{MurasugiSum}, one can define it for open books of~$S^3$, and the definition extends to open books without restriction on the associated $3$--manifolds.

\begin{definition}\label{def:murasugissum}
    Let $(\Sigma_1, \phi_1)$ and $(\Sigma_2, \phi_2)$ be two open books. 
    Let $n\geq1$ be an integer and $P$ be a $2n$--gon whose boundary has sides $s_1, \dots, s_{2n}$. 
    For~$i = 1,2$, let $f_i$ be an embedding of $P$ into $\Sigma_i$ such that, for~$j = 1, \dots , n$, $f_1(s_{2j-1}) \subset \partial \Sigma_1$, $f_2(s_{2j}) \subset \partial \Sigma_2$, and $f_1(s_{2j})\subset \Sigma_1$ and $f_2(s_{2j-1})\subset \Sigma_2$ are properly embedded arcs in the respective surfaces. We identify $P$ with its image in $\Sigma_1$, $\Sigma_2$, and~$\Sigma$, and often suppress $f_i$ in the notation.
    Let $\Sigma$ be the result of gluing $\Sigma_2$ to $\Sigma_1$ along~$P$, i.e.~$\Sigma\coloneqq\Sigma_1 \cup_{f_1\circ f_2^{-1}\colon f_2(P)\to f_1(P)} \Sigma_2$, and set $\phi\coloneqq \phi_2 \circ \phi_1$. Here, in slight abuse of notation, we write $\phi_1$ to be the diffeomorphism of $\Sigma$ that is obtained by doing $\phi_1$ on $\Sigma_1$ and the identity on $\Sigma_2$ (and similarly for~$\phi_2$).
    The open book $(\Sigma, \phi)$ is called the \emph{Murasugi sum of $(\Sigma_1, \phi_1)$ and $(\Sigma_2, \phi_2)$ along $P$}. Moreover, if for  $j = 1, \dots, n $, $f_1(s_{2j})\subset \Sigma_1$ and $f_2(s_{2j-1})\subset \Sigma_2$ are essential arcs, we say the Murasugi sum and the summing region are \emph{essential}. In another abuse of notation, we denote the sides of~$P \subset \Sigma$ by~$s_i$. Using this notation, a Murasugi sum respectively the plumbing region $P$ is essential, if all the sides $s_i$ of $P\subset\Sigma$ are essential in~$\Sigma$.
\end{definition}

If an $n$--Murasugi sum (i.e.~a Murasugi sum along a $2n$--gon) with $n\geq 2$ is not essential, then it is equivalent to an $(n-1)$--Murasugi sum. Note that a $1$--Murasugi sum is a boundary connected sum. In this case, if neither of the summands is a disk, we can always find a fixed essential arc (the one that separates the two surfaces). Thus we can restrict ourselves to essential Murasugi sums.

\begin{remark}
    When $P$ is a rectangle, i.e.~$n = 2$, we refer to the Murasugi sum as a \emph{plumbing}, if in addition the second open book represents a Hopf link, we refer to it as a \emph{Hopf plumbing} or \emph{stabilization}.
\end{remark}

\begin{remark}\label{rmk:visualMS}
Under the correspondence between open books and fibered links in $3$--manifolds, the Murasugi sum of $L_1$ in $M_1$ with $L_2$ in $M_2$ amounts to the $3$--manifold $M$ given as the connected sum of $M_1$ and $M_2$ and the link $L$ in $M$ defined as follows~\cite{MurasugiSum}. Choose pages in $M_1$ and $M_2$ and closed balls $B_1$ and $B_2$ in $M_1$ and~$M_2$, respectively, such that $B_i$ intersects the page exactly in the summing region~$P$.
Writing $M_i^\circ\coloneqq M_i\setminus \mathrm{int}(B_i)$, we take $M$ to be $M_1\cup_{F}  M_2$, where $F$ is an orientation reversing  diffeomorphism identifying the boundaries of $B_i$ such that it extends the identifications of $\partial P$ viewed in the two pages respectively. Then the two-sphere $S$ in $M$ defined as the quotient of $\partial B_i$ is the decomposition sphere for the connected sum of $M$ and the link $L$ is taken to be (a smoothing of) the result of taking the quotient of
$L_1\setminus \mathrm{int}(B_1)\cup L_2\setminus \mathrm{int}(B_1)\subset M_1\sqcup M_2$ in~$M$. See \Cref{fig:murasugi_example} for a visualization.
\begin{figure}[ht]
    \centering
    \includegraphics[width=0.6\linewidth]{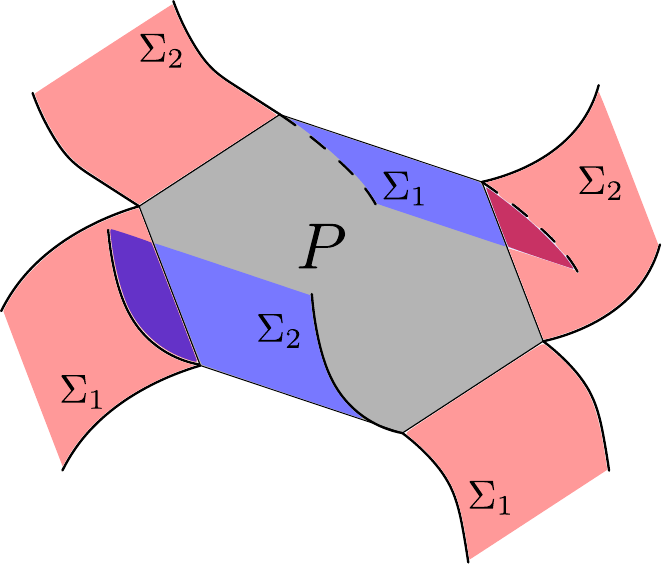}
    \caption{Model of $\Sigma$ in a ball-neighborhood of $P$ in the $3$--manifold associated to a $3$--Murasugi sum. The $6$--gon $P$ along which we are summing is shaded in gray, and lies on a  plane~$E$. The page $\Sigma_1$ lies below~$E$, and above it, lies the page~$\Sigma_2$. In the notation of \Cref{rmk:visualMS}, in case $M_1\cong M_2\cong S^3$, up to diffeomorphism $M$ can be identified with the one point compactification of $\R^3$ such that $S^2$ is the one-point compactification of~$E$, $M_1\setminus \mathrm{int}(B_1)$ and $M_2\setminus \mathrm{int}(B_2)$ are the lower and upper half-spaces with boundary~$E$, respectively, and the pages $\Sigma_i\subset M$ are surfaces in the said half-spaces.}
    \label{fig:murasugi_example}
    \end{figure}
\end{remark}    

\begin{lemma} \label{lemma:lemma1}
    Let $(\Sigma, \phi)$ be the essential Murasugi sum of $(\Sigma_1, \phi_1)$ and $(\Sigma_2, \phi_2)$ along a polygon~$P$. Assume there is an essential arc $\gamma \subset \Sigma$ fixed by~$\phi=\phi_2 \circ \phi_1$. Then one may isotope $\phi_1, \phi_2$,~$\gamma$,
with an isotopy of $\phi_i$ supported in~$\Sigma_i$,
such that $\phi_1(\gamma) = \phi_2^{-1}(\gamma)$ on~$\Sigma$, and $\phi_1(\gamma) = \phi_2^{-1}(\gamma) = \gamma$ on~$\Sigma \setminus P$. 
\end{lemma}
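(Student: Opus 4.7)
My plan is to reduce the conclusion of the lemma to the stronger statement that, after permissible isotopies, $\phi(\gamma) = \gamma$ holds as subsets of $\Sigma$. Suppose this has been achieved, and set $\alpha := \phi_1(\gamma)$. Then $\phi_2(\alpha) = \phi(\gamma) = \gamma$, so $\alpha = \phi_2^{-1}(\gamma)$, giving the first equality. For the second, the extended $\phi_1$ is the identity on $\Sigma_2\setminus P$, hence $\alpha = \phi_1(\gamma) = \gamma$ on $\Sigma_2\setminus P$; symmetrically, $\phi_2^{-1}$ is the identity on $\Sigma_1\setminus P$, giving $\alpha = \phi_2^{-1}(\gamma) = \gamma$ on $\Sigma_1\setminus P$. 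Together, $\alpha = \gamma$ on $\Sigma\setminus P$, as required.

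To produce the equality $\phi(\gamma) = \gamma$, I exploit the key observation that, for the extensions ``$\phi_i$ on $\Sigma_i$, identity on $\Sigma_{3-i}$'' to agree on $P = \Sigma_1\cap\Sigma_2$, both $\phi_1$ and $\phi_2$ must restrict to the identity on $P$; in particular $\phi$ fixes $\partial P$ pointwise and preserves each of the three pieces of $\Sigma\setminus\partial P$, acting as $\phi_1$ on $\Sigma_1\setminus P$, as $\phi_2$ on $\Sigma_2\setminus P$, and as the identity on $\mathrm{int}(P)$. I first isotope $\gamma$ rel endpoints to be transverse to $\partial P$ and in minimal position. Since $\phi$ fixes $\partial P$, one has $\phi(\gamma)\cap\partial P = \gamma\cap\partial P$ pointwise, so a standard bigon/innermost-disk argument for arcs in minimal position with respect to a multi-curve produces an ambient isotopy from $\phi(\gamma)$ to $\gamma$ preserving $\partial P$. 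Restricted to each piece, this shows that every component $\delta$ of $\gamma\cap(\Sigma_1\setminus P)$ satisfies $\phi_1(\delta)\sim\delta$ rel endpoints inside $\Sigma_1\setminus P$, and analogously for $\phi_2$ on $\Sigma_2\setminus P$.

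Finally, I invoke the standard fact that a surface diffeomorphism fixing the isotopy class (rel endpoints) of a properly embedded arc is isotopic rel boundary to a representative that fixes the arc set-wise. Applied component-wise on the disjoint sub-arcs, with isotopies of $\phi_1$ supported in $\Sigma_1\setminus P$ (so that $\phi_1$ remains the identity on $P$), I arrange $\phi_1(\delta) = \delta$ for each component $\delta$ of $\gamma\cap(\Sigma_1\setminus P)$; combined with the identity behavior of $\phi_1$ on $P\cup(\Sigma_2\setminus P)$, this gives $\phi_1(\gamma) = \gamma$ as sets. Analogously $\phi_2(\gamma) = \gamma$, so $\phi(\gamma) = \phi_2\circ\phi_1(\gamma) = \gamma$, as desired.

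The step I expect to require the most care is the passage from the ambient isotopy realizing $\phi(\gamma)\sim\gamma$ to an isotopy preserving $\partial P$; this is where the minimal position of $\gamma$ with respect to $\partial P$ is crucial, and where a careful bigon-style argument for arcs in minimal position with respect to the multi-curve $\partial P$ is applied.
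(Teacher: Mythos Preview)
Your first paragraph is a valid reduction: if one can achieve $\phi(\gamma)=\gamma$ as sets using only isotopies of $\phi_i$ supported in $\Sigma_i$, the conclusion follows exactly as you describe. The problem is everything after that, which rests on the claim that ``both $\phi_1$ and $\phi_2$ must restrict to the identity on $P$''. This is false. The extension of $\phi_i$ to $\Sigma$ is defined to be $\phi_i$ on $\Sigma_i$ and the identity on $\Sigma\setminus\Sigma_i=\Sigma_{3-i}\setminus P$, \emph{not} the identity on all of $\Sigma_{3-i}$; there is no overlap on which two prescriptions would have to agree. So $\phi_1$ genuinely acts as the (possibly nontrivial) original $\phi_1$ on $P\subset\Sigma_1$, and likewise for $\phi_2$. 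Consequently, $\phi$ does \emph{not} fix $\partial P$ pointwise (a side $s\subset\partial\Sigma_1$ is fixed by $\phi_1$ but is a proper arc in $\Sigma_2$ and can be moved by $\phi_2$), $\phi$ does not preserve the three pieces of $\Sigma\setminus\partial P$, and one cannot conclude $\phi(\gamma)\cap\partial P=\gamma\cap\partial P$ pointwise. Your third paragraph then aims for $\phi_1(\gamma)=\gamma$ on all of $\Sigma$, which is strictly stronger than the lemma's conclusion and is in general impossible: see the paper's Example~3.2 and the explicit picture of $\gamma\cap P\neq\phi_1(\gamma)\cap P$ at the bottom of Figure~\ref{fig:6_2}. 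The remark right after the statement of the lemma makes the same point.

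The paper's proof avoids this by never trying to compare $\phi_1(\gamma)$ with $\gamma$ directly. Instead it puts $\gamma$ in minimal position with respect to all the sides $s_i$ of $P$, separately isotopes $\phi_1$ within $\Sigma_1$ and $\phi_2$ within $\Sigma_2$ so that $\delta_1\coloneqq\phi_1(\gamma)$ and $\delta_2\coloneqq\phi_2^{-1}(\gamma)$ are each in minimal position with the $s_i$, matches their intersection points with each $s_i$ using intersection-number equalities, and then invokes a bigon lemma (Lemma~\ref{lemma:lemma2}) to find an isotopy \emph{fixing each $s_i$} that carries $\delta_1$ to $\delta_2$. That last isotopy automatically decomposes into pieces supported in $\Sigma_1$ and in $\Sigma_2\setminus P$, which is exactly what is needed. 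Your argument lacks this decomposition step, and the shortcut you take to replace it is based on a misreading of the Murasugi sum monodromy.
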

Crucially, $\gamma$ and $\phi_1(\gamma) = \phi_2^{-1}(\gamma)$ can intersect $P$ differently, and, in fact, this has to happen, whenever $(\Sigma_1, \phi_1)$ and $(\Sigma_2, \phi_2)$ do not have fixed essential arcs.
We illustrate \Cref{lemma:lemma1} with open books corresponding to fibered links in~$S^3$. 
\begin{example}\label{ex:6_2}
Let $\Sigma$ be the genus $2$ surface with $2$ boundary components depicted in \Cref{fig:6_2}. We let $\phi$ be the monodromy given by $\tau_5^{-1}\tau_4^{-1}\tau_3\tau_2\tau_1$, where $\tau_i$ denotes the positive Dehn twist about the curve labeled~$i$. 
\begin{figure}[ht]
    \centering
    \includegraphics[width=0.95\linewidth]{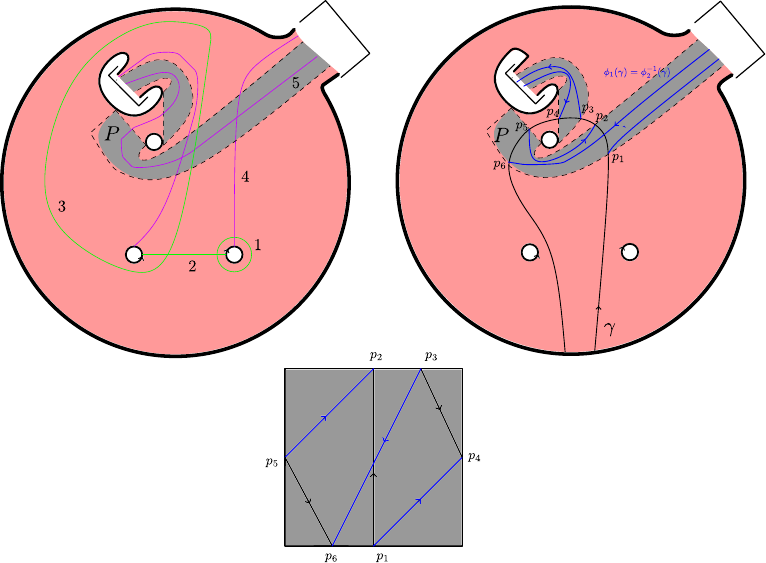}
    \caption{Left: the surface $\Sigma$ with the curves (green, labeled 1, 2, 3; and pink, labeled 4 and 5) of the Dehn twists factors of~$\phi$, and the plumbing region $P$ (grey). Explicitly, $\Sigma$ arises from the depicted plane surface (red) by identifying the oriented circles with an arrow head marking and by completing with a one-handle where indicated with square brackets. The curves and the plumbing regions are completed accordingly.
    \newline
    Right: the arc $\gamma$ (black) is fixed by~$\phi$, and $\tau_5(\gamma) = \phi_2^{-1}(\gamma) = \phi_1(\gamma)$ (blue in~$P$, black in $\Sigma\setminus P$) isotoped to coincide with $\gamma$ outside of~$P$.
    \newline
    Bottom: $\gamma \cap P$ (black), and $\phi_1(\gamma) \cap P = \phi_2^{-1}(\gamma) \cap P $ (blue).}
    \label{fig:6_2}
    \end{figure}
Since $\tau_5^{-1}$ is the last Dehn twist, $(\Sigma,\phi)$ is equivalent to a 2--Murasugi sum (i.e.~plumbing) of~$(\Sigma_1, \phi_1)$, where $\Sigma_1$ is a genus 2 surface with connected boundary, and $\phi_1 = \tau_4^{-1}\tau_3\tau_2\tau_1$ (after identifying $\Sigma_1$ with a subsurface of $\Sigma$);  and $(\Sigma_2, \phi_2)$ is an annulus with a negative Dehn twist. In the notation of \Cref{lemma:lemma1}, extending $\phi_i$ to~$\Sigma$, we have that $\phi=\phi_2\circ\phi_1$, where $\phi_1 = \tau_4^{-1}\tau_3\tau_2\tau_1$ and~$\phi_2=\tau_5^{-1}$.  It is also immediate that the plumbing is essential.
The open book $(\Sigma, \phi)$ has a fixed essential arc~$\gamma$; see right of \Cref{fig:6_2}. We can check that $\gamma$ is fixed by noting that $\tau_5(\gamma) = \phi_2^{-1}(\gamma)$ is isotopic to~$\phi_1(\gamma)$.

By \Cref{lemma:lemma1}, we know that we can arrange (by isotopies fixing the boundary) that $\phi_1(\gamma)$ and  $\phi_2^{-1}(\gamma)$ coincide, and furthermore that $\delta\coloneqq\phi_1(\gamma)=\phi_2^{-1}(\gamma)$ coincides with $\gamma$ outside of~$P$. This is realized on the right of \Cref{fig:6_2}. In this case, $\gamma$ (and hence
$\delta$) intersects $\partial P$ in $6$ points~$p_1, \dots, p_6$; in other words both $\gamma \cap P$ and $\delta \cap P$ consist of $3$ arcs with the same boundary points; see the bottom of \Cref{fig:6_2}. Note that the two sets of 3 arcs $\delta \cap P$ and $\gamma \cap P$ are not isotopic relative $\gamma \cap \partial P=\delta \cap \partial P$. In fact, this must be the case as neither  $(\Sigma_2,\phi_2)$ nor $(\Sigma_1,\phi_1)$ feature fixed essential arcs. To argue the latter, one may for example identify its binding with a prime fibered knot. In fact, one finds the following.

The $3$--manifold associated with $(\Sigma, \phi)$ is diffeomorphic to $S^3$ since $(\Sigma, \phi)$ is equivalent to the open book constructed as follows. Start with an annulus with a positive Dehn twist (corresponding to a positive Hopf band in $S^3$), this twist corresponds to~$\tau_1$. Then do the positive stabilizations corresponding to $\tau_2$ and~$\tau_3$, and finally the negative stabilization corresponding to $\tau_4^{-1}$ to obtain~$(\Sigma_1, \phi_1)$. It turns out that the binding of $(\Sigma_1, \phi_1)$ is the prime fibered knot~$6_2$, while the binding of~$(\Sigma, \phi)$, the result after the final plumbing with~$(\Sigma_2, \phi_2)$, is the connected sum of the prime fibered knot $8_{21}$ and a negative Hopf link.
Because of this, we think of this example as a sibling to \Cref{ex:counterexample} from the introduction, and invite the reader to explicitly realize the pages and the binding in $S^3$ to check that indeed the resulting bindings are $6_2$ and the connected sum of a Hopf link with~$8_{21}$. Vice versa, we invite the reader to realize \Cref{ex:counterexample} as an open book with monodromy $\phi=\phi_2\circ\phi_1$ given by a product of Dehn twists and to realize $\phi_1$ and $\phi_2$ such that $\phi_1(\gamma)$ and $\phi_2^{-1}(\gamma)$ coincide and only differ from $\gamma$ in the plumbing region~$P$.
\end{example}
\begin{proof}[Proof of \Cref{lemma:lemma1}]
    First, we may and do assume that $\partial \gamma$ is disjoint from the corners of $P$ (if it is not, isotope $\partial \gamma$ away along the boundary, apply the lemma, and isotope back).
    
    Let $s_1, \dots , s_{2n}$ be the sides of~$P$. These are essential arcs in~$\Sigma$. Isotope $\gamma$ relative to the boundary such that it is simultaneously transverse to all the $s_i$ and intersects them minimally.
    Note that $\phi_1(\gamma)$ has no bigons with $s_i$ if~$s_i \subset \partial \Sigma_1$. However, for $s_i \subset \partial \Sigma_2$, $\phi_1(\gamma)$ and $s_i$ may have bigons. In this case, we can isotope $\phi_1$ with an isotopy supported in $\Sigma_1$ such that $\phi_1(\gamma) \eqqcolon \delta_1$ and $s_i$ have no bigons. Then $\delta_1$ intersects all $s_i$ minimally.
    Similarly we may isotope $\phi_2$ with an isotopy supported in $\Sigma_2$ such that $\phi_2^{-1}(\gamma) \eqqcolon \delta_2$ intersects all $s_i$ minimally. 

    Observe that for~$s_i \subset \partial \Sigma_1$, we have equalities of intersection numbers
$i(\gamma, s_i) = i(\phi_1(\gamma), \phi_1(s_i)) = i(\delta_1, s_i) = i(\delta_2, s_i)$, and similarly for~$s_i \subset \partial \Sigma_2$.
    Hence, we may isotope $\phi_1$ such that $\delta_1 \cap s_i = \delta_2 \cap s_i$ for all $s_i \subset \partial \Sigma_2$ and analogously isotope $\phi_2$ such that $\delta_1 \cap s_i = \delta_2 \cap s_i$ for all~$s_i \subset \partial \Sigma_1$.

    Overall, we have that $\delta_1$ and $\delta_2$ are isotopic, they both intersect all $s_i$ minimally, and their intersections with $s_i$ coincide for all~$i$. By \Cref{lemma:lemma2} below, there exists an isotopy of $\Sigma$ sending $\delta_1$ to~$\delta_2$, fixing all $s_i$ pointwise.
This isotopy can be written as composition of an isotopy with support in $\Sigma_1$ and an isotopy with support in~$\Sigma_2\setminus P$.
We now change $\phi_1$ and $\phi_2$ by composing with the first and second of those isotopies, respectively.
       In this way, we have achieved~$\delta_1 = \delta_2$. Moreover, observe the following. Since $\phi_1$ is the identity on~$\Sigma \setminus \Sigma_1$, we have that $\delta_1 = \gamma$ on~$\Sigma \setminus \Sigma_1$. Similarly $\delta_2 = \gamma$ on~$\Sigma \setminus \Sigma_2$. This implies $\delta_1 = \delta_2 = \gamma$ on $\Sigma\setminus P$ as desired.
\end{proof}

\begin{lemma} \label{lemma:lemma2}
    Let $\Sigma$ be a compact surface, $s_1, \dots, s_n$ properly embedded arcs with pairwise disjoint interiors, and $\delta_1, \delta_2$ isotopic properly embedded arcs with ${\partial \delta_1 = \partial \delta_2}$ and such that they each intersect each $s_i$ minimally. Moreover, assume that $\delta_1 \cap s_i = \delta_2 \cap s_i$ for all i. Then, there exists an isotopy of $\Sigma$ fixing each $s_i$ pointwise that sends $\delta_1$ to~$\delta_2$. 
\end{lemma}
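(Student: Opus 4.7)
The plan is to cut $\Sigma$ along $S \coloneqq \bigcup_i s_i$ into a (possibly disconnected) surface $\hat{\Sigma}$, show that corresponding sub-arcs of $\delta_1$ and $\delta_2$ are isotopic rel endpoints inside each component of $\hat{\Sigma}$, and then apply the isotopy extension theorem componentwise and reglue.

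First, orient $\delta_1$ and $\delta_2$ consistently from their common initial to their common terminal endpoint. Because $\delta_1 \cap s_i = \delta_2 \cap s_i$ as sets, both arcs are in minimal position with each $s_i$, and they are isotopic rel endpoints, the intersection points $p_1, \dots, p_m$ of $\delta_j$ with $S$ appear in the same order along $\delta_1$ and along $\delta_2$, and the crossing direction at each $p_\ell$ agrees (checked via signs of intersection relative to a small transversal at each $p_\ell$, combined with isotopy invariance of the algebraic intersection number). Hence each $\delta_j$ decomposes as a concatenation $a_j^0 \cdot a_j^1 \cdots a_j^m$, with $a_1^\ell, a_2^\ell$ sharing endpoints in $\partial\hat{\Sigma}$ and lying in the same component $C^\ell$ of $\hat{\Sigma}$.

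The heart of the argument is to establish $a_1^\ell \simeq a_2^\ell$ rel endpoints inside $C^\ell$. To that end, pick any homotopy $H\colon I \times I \to \Sigma$ from $\delta_1$ to $\delta_2$ rel endpoints, put it in general position with respect to each $s_i$, and study the properly embedded $1$--submanifold $Z \coloneqq H^{-1}(S) \subset I \times I$. Each component of $Z$ is either a closed loop, an arc with both endpoints on one horizontal side of $I \times I$ (a ``half-loop''), or an arc from the bottom side to the top. An innermost closed loop or half-loop of $H^{-1}(s_i)$ bounds a disk in $I \times I$ whose image under $H$ is an immersed bigon between $s_i$ and the relevant $\delta_j$; an innermost-disk argument converts it into an embedded bigon, which would allow a reduction of $|\delta_j \cap s_i|$, contradicting minimality. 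After modifying $H$ rel $I \times \{0,1\}$ to eliminate all such components, the remaining arcs of $Z$ run from bottom to top, and the preserved intersection order forces them to pair the $j$-th intersection point on $\delta_1$ with the $j$-th on $\delta_2$ inside each $s_i$. The complementary regions of $Z$ are then topological squares $R_\ell$ with $H(R_\ell) \subseteq \overline{C^\ell}$; a final small homotopy of $H$ contracts the loop in $s_i$ traced by each vertical edge of $R_\ell$ to its basepoint, after which $H|_{R_\ell}$ is a homotopy from $a_1^\ell$ to $a_2^\ell$ in $C^\ell$ rel endpoints.

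To conclude, invoke the classical theorem of Baer and Epstein that homotopic properly embedded arcs in a compact surface are isotopic rel endpoints, upgrading these homotopies to isotopies, and apply the isotopy extension theorem inside each $C^\ell$ to produce an ambient isotopy of $C^\ell$ fixing $\partial C^\ell$ pointwise. Gluing these componentwise ambient isotopies along the cut-open copies of the $s_i$ yields the desired ambient isotopy of $\Sigma$ fixing each $s_i$ pointwise and sending $\delta_1$ to $\delta_2$. I expect the main obstacle to be the normalization step for $H$: carefully implementing bigon-elimination to remove all closed loops and half-loops of $Z$ without spoiling the boundary conditions, and leveraging minimality to constrain the surviving arcs of $Z$ to connect matching pairs of intersection points in the correct order.
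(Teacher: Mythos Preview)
Your approach---cutting $\Sigma$ along $S$ and working componentwise via Baer--Epstein and isotopy extension---is genuinely different from the paper's. The paper stays in $\Sigma$ throughout: it shows directly that every bigon between $\delta_1$ and $\delta_2$ has interior disjoint from each $s_i$ (using minimality together with the hypothesis $\delta_1\cap s_i=\delta_2\cap s_i$), so the bigon-removal isotopy automatically fixes each $s_i$ pointwise. That argument is shorter and avoids the bookkeeping of cutting and regluing.

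There is a real gap at your order claim. Your small-transversal argument does show that the crossing \emph{sign} of $\delta_1$ and $\delta_2$ at each individual point $p_\ell$ agree, but it does not show that $\delta_1$ and $\delta_2$ traverse the set $\{p_1,\dots,p_m\}$ in the same \emph{order}. Your own $H^{-1}(S)$ analysis only shows that the $\ell$-th crossing of $\delta_1$ and the $\ell$-th crossing of $\delta_2$ lie on the same $s_i$ and are joined by a path in $s_i$; it does not show they are the same point of $s_i$. Thus the image of a vertical edge of $R_\ell$ is a priori a path, not a loop, and cannot be ``contracted to its basepoint''. Without matching endpoints you cannot invoke Baer--Epstein rel endpoints, and no isotopy fixing $\partial\hat\Sigma$ can carry $a_1^\ell$ to $a_2^\ell$. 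The order claim is true, but the cleanest way to prove it is precisely the paper's observation that bigons between $\delta_1$ and $\delta_2$ miss the $s_i$; once you have that, the cut-surface detour is no longer needed.

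A secondary issue: several sub-arcs $a_j^\ell$ may lie in the same component $C$ of $\hat\Sigma$, and you need one ambient isotopy of $C$ fixing $\partial C$ carrying all the $a_1^\ell$ in $C$ to the corresponding $a_2^\ell$ simultaneously. Separate applications of isotopy extension do not obviously compose correctly, so you would need a multi-arc version of the Baer--Epstein step here.
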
 

\begin{proof}

    If $\delta_1$ and $\delta_2$ are isotopic, then the bigon-criterion states that an isotopy sending $\delta_1$ to $\delta_2$ is realized by removing bigons cut out by them. %
    Suppose first that an isotopy sending $\delta_1$ to $\delta_2$ can be done without removing any vertices of the bigons. 
    Then in particular it fixes the intersection points $x \in \delta_1 \cap \delta_2 \cap s_i$, and since the intersection points of both arcs with all $s_i$ coincide, the bigons are disjoint from $s_i$ except possibly at their vertices (which are fixed), and so the isotopy fixes all $s_i$ pointwise; see the left hand side of \Cref{fig:allowed_isotopy}.
\begin{figure}[tb]
    \centering
    \includegraphics[width=0.65\linewidth]{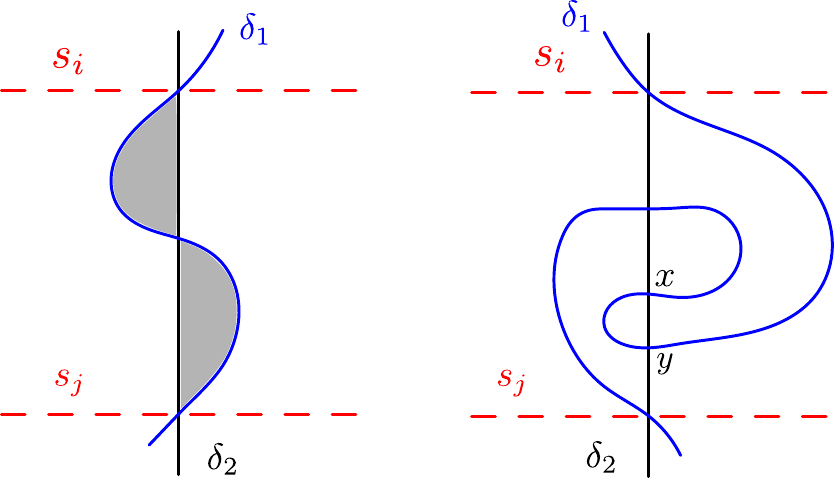}
    \caption{On the left, we can remove the shaded bigons by an isotopy fixing the vertices, in particular, this isotopy fixed $s_i$ and $s_j$ pointwise. On the right, we can remove the intersection points $x$ and $y$ by an isotopy fixing $s_i$ and~$s_j$, and then we are in the case of the left hand side.}
    \label{fig:allowed_isotopy}
    \end{figure}
\begin{figure}[tb]
    \centering
    \includegraphics[width=0.65\linewidth]{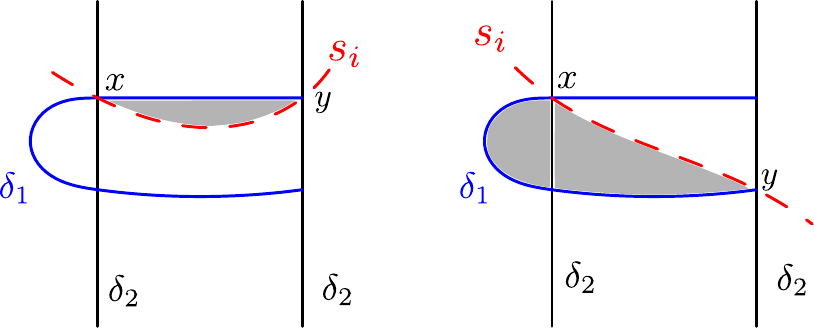}
    \caption{If the isotopy removes a point $x \in \delta_1 \cap \delta_2 \cap s_i$, then we can find a bigon that shows that the intersection of $\delta_1$ and $s_i$ was not minimal.}
    \label{fig:impossible_isotopy}
    \end{figure}

    Now suppose that the isotopy removes pairs of intersection points. If all of them lie away from the~$s_i$, then the isotopy removing them fixes~$s_i$; see the right hand side of \Cref{fig:allowed_isotopy}. So assume one of these intersection points $x$ lies on some~$s_i$. Note that every bigon that removes the intersection point has to be contained in a bigger bigon (since $\delta_1$ and $\delta_2$ are isotopic). Then, since the intersection is transverse, $s_i$ must enter this bigon. However, it must also leave it by another intersection $y \in \delta_1 \cap \delta_2$. This implies that $s_i$ forms a bigon with one of the arcs (say~$\delta_1$) that can be removed, reducing the number of intersection points~$\delta_1 \cap s_i$, contradicting minimality; see \Cref{fig:impossible_isotopy}.
    \end{proof}

\subsection{Right-veering and left-veering for arcs and monodromies, and veering in polygons}
We say an arc $\gamma$ in a surface $\Sigma$ \emph{veers to the right} under a diffeomorphism $\phi$ of~$\Sigma$, if
$\gamma$ is not fixed,
and may be isotoped such that it intersects its image transversely and minimally
and at the starting point the tangent vector of $\phi(\gamma)$ followed by the tangent vector of $\gamma$ yield the orientation of~$\Sigma$. We denote this by~$\phi(\gamma) > \gamma$. Analogously, we have the notion of $\gamma$ \emph{veering to the left} under $\phi$ and denote this by~$\phi(\gamma)< \gamma$. We write~$\phi(\gamma)\geq \gamma$, if $\gamma$ veers to the right or is fixed by~$\phi$, and similarly we write  $\phi(\gamma)\leq \gamma$ if $\gamma$ veers to the left or is fixed by~$\phi$. More generally, for arcs $\gamma$ and $\gamma'$ that have the same start point, we write $\gamma<\gamma'$, $\gamma\leq\gamma'$, $\gamma>\gamma'$, and~$\gamma\geq\gamma'$, and mean the analogously defined notion (with $\gamma'$ in place of~$\phi(\gamma)$).

For a given open book, if no essential arc veers to the left, we say that the open book is \emph{right-veering}, if every essential arc veers to the right, we say that the open book is \emph{strictly right-veering}. \emph{Left-veering} and \emph{strictly left-veering} open books are defined analogously.

In the next lemma we consider embedded unoriented arcs in a closed disk~$D$. In fact, we will take $D$ to be a compact convex subset of $\mathbb{R}^2$ and we take the unoriented arcs to be straight arcs, which we call \emph{chords}. If two chords $a$ and $b$ share an endpoint~$p$, we say $a$ is to the right of $b$ at~$p$, if the arcs $\gamma_a$ and $\gamma_b$ given by orienting $a$ and~$b$ with starting point~$p$, respectively, point $p$ satisfy~$\gamma_a > \gamma_b$. 
\begin{lemma} \label{lemma:lemma3}
    Let $D$ be a disk, $A$ and $B$ finite sets of chords with $a \cap a' = b \cap b' = \varnothing$ for $a\neq a' \in A$, $b\neq b' \in B$, and such that $\bigcup_{a \in A} \partial a = \bigcup_{b \in B} \partial b$. 
    Then either~$A = B$, or there exist $a_1, a_2 \in A$, $b_1, b_2 \in B$ with $a_i \cap b_i = \{ p_i \} \in \partial D$, and seen from~$p_1$, $b_1$ is to the right of~$a_1$, and seen from~$p_2$, $b_2$ is to the left of~$a_2$. 

    Moreover, if $D$ is an $n$--gon, and for each chord~$c\in A\cup B$, $c$ has endpoints on different sides, we can find chords as above with the additional condition that for~$i = 1,2$, the other endpoint of $a_i$ is on a different side than the other endpoint of~$b_i$.
\end{lemma}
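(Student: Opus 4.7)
The plan is to induct on $n = |A| = |B|$, with base case $n = 1$ giving $A = B$ trivially. For the inductive step, assume $A \neq B$. If there is a chord $c \in A \cap B$, then $c$ separates $D$ into two sub-disks, and the non-crossing property implies that the remaining chords of $A$ and $B$ are each contained in one sub-disk, with the restrictions of $A$ and $B$ disagreeing on at least one of them. I would apply the inductive hypothesis there and lift back to $D$: the right/left designations and the $n$--gon side labels restrict naturally, so the moreover condition also lifts.

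If instead $A \cap B = \varnothing$, I would take an innermost chord $a = v_i v_j \in A$, meaning one for which one of the two arcs of $\partial D$ between $v_i$ and $v_j$ contains no other chord endpoints; such an $a$ always exists in a non-crossing matching. Letting $v_k$ and $v_l$ be the $B$--partners of $v_i$ and $v_j$, both must lie on the non-empty (``long arc'') side of $a$. A direct tangent computation, using the convention that $b$ is to the right of $a$ at $p$ iff the tangent of $b$ at $p$ is clockwise from that of $a$, shows that the pairs $(a, v_i v_k)$ at $v_i$ and $(a, v_j v_l)$ at $v_j$ realize opposite local configurations: one is ``right'' and the other is ``left''. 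The flip arises because the tangent of $a$ at $v_j$ (pointing toward $v_i$) is opposite to the tangent of $a$ at $v_i$ (pointing toward $v_j$), so the local meaning of ``right'' and ``left'' is reversed relative to the globally fixed sides of $a$. Setting $a_1 = a_2 = a$ then yields the two required configurations.

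For the moreover part, I would argue by contradiction that $s(v_k) \neq s(v_j)$ (the condition $s(v_l) \neq s(v_i)$ follows by the symmetric argument). Suppose $s(v_k) = s(v_j) =: \beta$, and let $T \subset D$ be the half cut off by $b_1 = v_i v_k$ that contains $v_j$. The boundary of $T$ along $\partial D$ runs from $v_i$ through $v_j$ to $v_k$; the sub-arc from $v_j$ to $v_k$ is confined to side $\beta$, because sides of the $n$--gon are connected arcs of $\partial D$ and an escape from $\beta$ would force the sub-arc to loop around through the side containing $v_i$, and therefore to traverse $v_i$---impossible. Hence every chord endpoint on this sub-arc lies on $\beta$, and non-crossing of $B$ then forces $b_2 = v_j v_l$ into $T$, placing $v_l$ on the sub-arc and so on side $\beta$; this contradicts the chord condition on $b_2$. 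The main obstacle will be tracking the orientation conventions carefully in the innermost-chord argument and making the ``arc stays within side $\beta$'' step precise.
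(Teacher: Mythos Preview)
Your approach is correct and genuinely different from the paper's. The paper does not use induction; instead, it argues by contradiction via a \emph{zigzag} process. Assuming every $b$ is to the right of every $a$ at their shared endpoints, it starts at some pair $(a,b)$ with common endpoint~$p$, follows $b$ to its other endpoint~$p'$, finds the $A$--chord $a'$ there (noting $b$ must still be to the right of $a'$ since $a'\neq a$), follows $a'$ to its other endpoint~$p''$, and so on. Finiteness forces a termination that produces endpoints belonging to only one of $A$ or $B$, contradicting $\bigcup\partial a = \bigcup\partial b$. The moreover clause is handled by a second zigzag of the same flavor.

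Your argument replaces this global contradiction with a local, constructive step: peel off common chords by induction, then in the disjoint case pick an innermost $a\in A$ and read off the two opposite configurations at its endpoints. The flip you describe is exactly right---both $B$--partners lie on the long-arc side of $a$, and reversing the orientation of $a$ at the two endpoints swaps right and left. Your side argument for the moreover clause (trap $v_l$ in the sub-arc of $\beta$ between $v_j$ and $v_k$, using innermostness to exclude the short arc and non-crossing of $B$ to confine $b_2$) is also sound; the key point that the counterclockwise arc from $v_j$ to $v_k$ avoids $v_i\notin\beta$ and hence stays in $\beta$ is precisely what you need.

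What each buys: the paper's zigzag is shorter and avoids the bookkeeping of splitting the polygon along a common chord (checking that side labels and the ``different sides'' hypothesis descend and lift correctly---which they do, since each side of $D$ contributes at most one side to each sub-disk). Your approach is more constructive: it locates the witnessing pair explicitly at an innermost chord, rather than inferring its existence from the impossibility of an infinite walk.
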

As an illustration of \Cref{lemma:lemma3}, consider for example the 4--gon $P$ on the bottom of \Cref{fig:6_2}, where $A = \gamma \cap P$ and $B = \phi_1(\gamma) \cap P = \phi_2^{-1}(\gamma) \cap P$. At $p_1$ the chord of $B$ is to the right of the chord of $A$ (and ends on a different side), while at $p_5$ the chord of $B$ is to the left of the chord of $A$ (and also ends on a different side).
\begin{proof}[Proof of \Cref{lemma:lemma3}]
    For the first claim, assume towards a contradiction that~$A \neq B$, and that for all~$a \in A$, $b \in B$ such that $a\cap b = \{p\} \in \partial D$, $b$ is to the right of~$a$ at~$p$. 
    Since~$A \neq B$, there exist such~$a,b$. Now, let $p'$ be the other endpoint of~$b$. Let $a'$ be the chord in $A$ with~$p' \in a'$. Since~$p \notin a'$, $b$ has to be to the right of $a'$ as seen from~$p'$. Let $p''$ be the other endpoint of~$a'$. Let $b' \in B$ be the chord such that~$p'' \in b'$. See~\Cref{fig:zigzag}.
    \begin{figure}[b]
    \centering
    \includegraphics[width=0.3\linewidth]{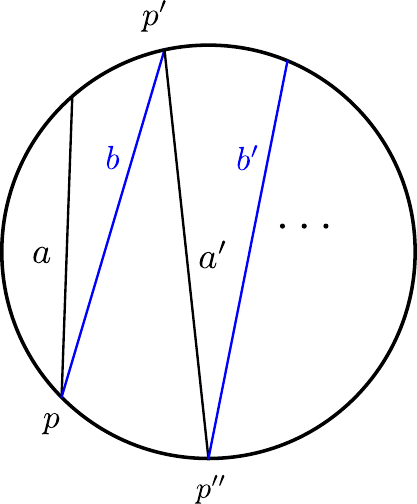}
    \caption{At each endpoint, the corresponding chord from $B$ is to the right of the chord from $A$ by hypothesis, but finiteness of $A$ and $B$ implies that this cannot happen indefinitely.}
    \label{fig:zigzag}
    \end{figure}
    We cannot keep doing this indefinitely because of finiteness of $A$ and~$B$, but this means there would be points in on $\partial D$ belonging only to $\bigcup_{a \in A} \partial a$ or only to $\bigcup_{b \in B} \partial b$. This gives the contradiction, as desired. 

For the second claim, suppose that $D$ is an $n$--gon, and suppose no chord joins points on the same side of~$\partial D$. Assume towards a contradiction that for any $a \in A, b \in B$ such that~$a\cap b = \{p\} \in \partial D$, and $b$ is to the right of~$a$, the other endpoints of $a$ and $b$ lie on the same side of~$D$. Note that a pair $a,b$ where $b$ is to the right of~$a$ at $a\cap b = \{p\} \in \partial D$ exists by the first claim, proved in the previous paragraph.
   
    Then, take one such pair~$a,b$. Let $p'$ be the other endpoint of~$b$, and $a'$ the chord of $A$ with endpoint on~$p'$. Now, seen from~$p'$, $b$ is to the right of $a'$ (if it were not, since~$a \cap a' = \varnothing$, $a'$ would have both its endpoints on the same side of~$D$, see the left hand side of \Cref{fig:different_sides}). However, this means that the other endpoint of~$a'$, call it~$p''$, lies on the same side of $D$ as~$p$. Then, let $b' \in B$ be the chord with endpoint on~$p''$. It must be to the right of $a'$ seen from~$p''$. Hence by the hypothesis, the other endpoint of $b'$ must be on the same side of $D$ as~$p'$. We end up in a situation like in the previous case, see the right hand side of \Cref{fig:different_sides}.
    \begin{figure}[tb]
    \centering
    \includegraphics[width=0.75\linewidth]{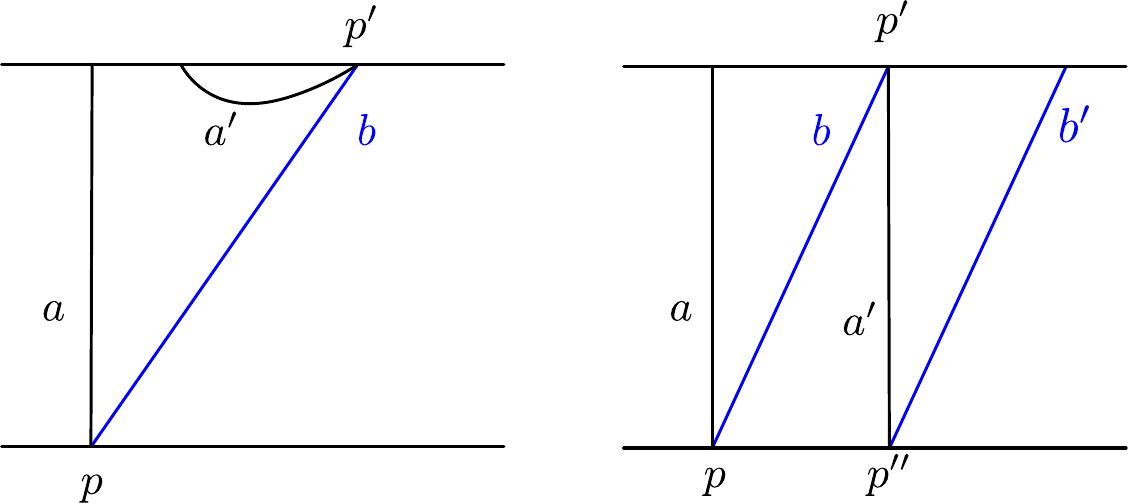}
    \caption{On the left hand side, if $a'$ is to the right of $b$ at~$p'$, then it would have both its endpoints on the same side. On the right hand side, we see the at each endpoint the chords from $B$ must be to the right of the chords from~$A$, but this cannot happen indefinitely.}
    \label{fig:different_sides}
    \end{figure}
    As before, this procedure has to end because of finiteness, leading to a contradiction. Therefore, there do exist $a\in A, b \in B$ with $a\cap b = \{p\}$ and $b$ to the right of $a$ at~$p$, such that the other endpoints of $a$ and $b$ lie on different sides. Observe that the same argument works for $b$ to the left of~$a$.
\end{proof}

\subsection{Proof of \texorpdfstring{\Cref{prop:nofixedarccriterion}}{Proposition \ref{prop:nofixedarccriterion}}}
We are now ready for the proof of \Cref{prop:nofixedarccriterion}, which we restate for the convenience of the reader.
\criterion*
\begin{proof}
    Assume towards a contradiction that there exists a fixed essential arc in~$(\Sigma, \phi)$. Among such arcs, pick $\gamma$ that minimizes the number of intersections points~$\gamma\cap \partial P$.
    By an isotopy of $\gamma$, $\phi_1$ and $\phi_2$ coming from \Cref{lemma:lemma1}, we arrange for $\phi_1(\gamma)=\phi_2^{-1}(\gamma)$ on $\Sigma$ and $\phi_1(\gamma)=\phi_2^{-1}(\gamma) = \gamma$ on~$\Sigma\setminus P$. The minimality of $\gamma\cap \partial P$ still holds true.
    
    If (after an isotopy of $\phi_1$ supported in~$P$) $\phi_1(\gamma) = \gamma$ in all of~$\Sigma$, then we also have $\phi_2^{-1}(\gamma) = \gamma$ in all of $\Sigma$ (after an isotopy of $\phi_2$ supported in~$P$). Consider~$\gamma\cap \Sigma_2$, which is a finite union of proper arcs in~$\Sigma_2$, each of them fixed by~$\phi_2^{-1}$.
    However, $\phi_2^{-1}$ is strictly left-veering (viewed as a map on~$\Sigma_2$), hence each of those arcs is boundary parallel in~$\Sigma_2$. Using that $\gamma$ and $P$ are essential and $\gamma$ and $\partial P$ intersect minimally, one may deduce that $\gamma\cap \Sigma_2 = \varnothing$. Indeed, $\gamma\cap \Sigma_2$ is a collection of boundary parallel arcs in $\Sigma_2$ (that is, together with segments of~$\partial \Sigma_2$, they cut out bigons). Then, using these bigons we may isotope $\gamma$ freely along the boundary away from~$\Sigma_2$.
Thus, we have $\gamma \subset \Sigma_1 \setminus P$, contradicting the assumption that $(\Sigma_1, \phi_1)$ has no fixed essential arcs disjoint from~$P$ (since the Murasugi sum is essential, if $\gamma \subset \Sigma_1$ is essential in $\Sigma$ then it must also be essential in $\Sigma_1$). 

    Therefore, we may and do assume that the arc $\phi_1(\gamma) = \phi_2^{-1}(\gamma)$ does not agree with~$\gamma$, even after any isotopy supported in $P$ (recall that they do agree outside of~$P$). In other words, thinking of $P$ as a polygon in~$\mathbb{R}^2$, the following two set of chords of $P$ differ: the set of chords $A$ given by straightening the components of $\gamma \cap P$ and the set of chords $B$ given by straightening the components of~$\phi_1(\gamma) \cap P$. 
    Observe that $A$ and $B$ satisfy the hypotheses of \Cref{lemma:lemma3}, including the $n$--gon condition because of minimal intersection of $\gamma$ and $\phi_1(\gamma)$ with the sides of~$P$. Hence, \Cref{lemma:lemma3} yields chords~$a\in A$, $b \in B$ such that~$a \cap b = \{p \}$, $b \geq a$ as seen from~$p$, and the other endpoints of $a$ and $b$ (call them $p_a$ and~$p_b$) lie on different sides of~$P$. 
    Let $i\in\{1,2\}$ such that~$p \in \partial \Sigma_i$.
    Let $a^{\mathrm{ex}}$ be the unique subarc of $\gamma$ that starts at $p$ and is a proper arc in~$\Sigma_i$.
    ($a^{\mathrm{ex}}$ extends the arc in $P$ corresponding to~$a$, and may in fact just be equal to it, namely if~$p_a \in \partial\Sigma_i$). Similarly extend $b$ to~$b^{\mathrm{ex}}$. Then $b^{\mathrm{ex}}= \phi_1(a^{\mathrm{ex}})$ or $b^{\mathrm{ex}}= \phi_2^{-1}(a^{\mathrm{ex}})$ if $i = 1$ or~$i = 2$, respectively. Since 
$b^{\mathrm{ex}} > a^{\mathrm{ex}}$ and $\phi_2^{-1}$ is left-veering, it follows that $i = 2$ cannot happen. Therefore, we have~$i = 1$. 

    Since $p_a$ and $p_b$ lie on different sides of~$P$, and since sides of $P$ alternate between $\partial \Sigma_1$ and~$\partial \Sigma_2$, there exists a chord with one endpoint equal to~$p$, and the other on $\partial \Sigma_1$ between $p_a$ and~$p_b$. Let $c$ be the proper arc in $\Sigma_1$ starting at $p$ corresponding to this chord. In particular, $b^{\mathrm{ex}} = \phi_1(a^{\mathrm{ex}}) > c > a^{\mathrm{ex}}$. However, then $\phi_1(c) > \phi_1(a^{\mathrm{ex}}) = b^{\mathrm{ex}} > c$, contradicting $\phi_1(c) \leq c$, which holds by the hypothesis on~$\phi_1$.
\end{proof}

We conclude this section with the remark that from the above proof it is clear that in fact we can make a statement concerning the fixed essential arcs $(\Sigma, \phi)$ as in \Cref{prop:nofixedarccriterion}, even when we drop the assumption that $(\Sigma_1, \phi_1)$ has no fixed essential arc.
\begin{remark}\label{rem:fixarcsdesc}
Let $P$ be an essential summing region for open books $(\Sigma_1, \phi_1)$ and $(\Sigma_2, \phi_2)$ such that $\phi_1(a)\leq a$ for every essential arc $a$ in $P\subset \Sigma_1$ and  $(\Sigma_2, \phi_2)$ is strictly right-veering. Then all fixed essential arcs $\gamma$ of $(\Sigma, \phi)$ can be isotoped (with an isotopy that preserves the boundary setwise) to lie in $\Sigma_1\setminus P\subset \Sigma$. In particular, up to isotopy, all fixed essential arcs of $(\Sigma, \phi)$ arise from fixed essential arcs of $(\Sigma_1, \phi_1)$ that are disjoint from~$P\subset \Sigma_1$. \Cref{prop:nofixedarccriterion} corresponds to the case where we further assume that $(\Sigma_1, \phi_1)$ has no fixed essential arcs.

To see this, just note that in the proof of \Cref{prop:nofixedarccriterion}, one may take $\gamma$ to be any fixed arc that in its isotopy class (preserving the boundary setwise) minimizes the intersection points with~$\partial P$, and rather than reaching a contradiction, conclude that $\gamma \subset \Sigma_1\setminus P$, as is done in the second paragraph of the proof.
\end{remark}

\section{Iterative Murasugi summing of strictly veering open books}
\label{sec:trees}

We define a construction of iterative Murasugi sums, encoded by a tree with vertices open books and edges indicating how they are plumbed together, that yields a class of prime open books, in fact open books without essential fixed arcs. The resulting open books subsume the links that arise from homogeneous braid diagrams without decomposition circles and all fibered arborescent links in~$S^3$.

Let $T$ be a tree, where the vertex set $V$ consists of open books and each edge $e=\{v_1=(\Sigma_1,\phi_1),v_2=(\Sigma_2,\phi_2)\}$ is labeled by an essential summing region $P$ for $v_1$ and~$v_2$, i.e.~embeddings of $f_i\colon P\to \Sigma_i$ as described in Definition~\ref{def:murasugissum}. We further assume that all summing regions are disjoint; more precisely, if $e=\{v_1,v_2\}$ and $e'=\{v_1,v_3\}$ that have a vertex $v_1$ in common, then their labels, say $f_1\colon P\to \Sigma_1$ and $f_2\colon P\to \Sigma_2$ for $e$ and $f_1'\colon P'\to \Sigma_1$ and $f_3\colon P'\to \Sigma_3$ for~$e'$, satisfy~${f_1(P)\cap f_1'(P')=\varnothing}$. We call such a labeled tree a \emph{tree of open books}.

Let $\mathrm{gr}$ be a choice of \emph{growing} of a tree~$T$, i.e.~inductive construction of $T$ starting from a single vertex by iteratively growing leaves. A growing of a tree $T$ yields a word in its vertices (by listing them from right to left in order of appearance).  Given a tree of open books $T$ and a growing
$\mathrm{gr}$ of~$T$, we have an associated open book $(\Sigma,\phi)$ given by iterative Murasugi sums guided by the growing. More precisely, $\Sigma$ is the result of gluing the surfaces using the labels of the edges, and $\phi$ is given as follows. Ordering the vertices according to the growing, relabeling them order preservingly as
\[v_1=(\Sigma_1,\phi_1),v_2=(\Sigma_2,\phi_2),\dots, v_n=(\Sigma_n,\phi_n), \] and writing $\phi_i\colon \Sigma\to \Sigma$ for the extension of~$\phi_i\colon \Sigma_i\to \Sigma_i$, we set \[\phi\coloneqq \phi_n\circ\phi_{n-1}\circ\dots\circ \phi_2\circ \phi_1.\]

\subsection{Figure-eight plumbings}
We illustrate these notions by realizing the concept of a figure-eight plumbing (defined in the introduction) as the associated open book of a tree of open books.

\begin{example}\label{ex:fig8plumb} Let $w=(\Sigma,\phi)$ be an open book and let $a$ be an arc in~$\Sigma$. We describe a tree of open books $T$ with a growing such that the resulting open book is the figure-eight plumbing of $w$ along~$a$. For this let $p$ and $n$ be open books corresponding to a positive and negative Hopf link, respectively, that is, their pages are an annulus, say $A_p$ and~$A_n$, respectively, and their monodromies are a right-veering and left-veering Dehn twist, respectively. Let $T$ be the tree with vertices $w, p, n$ and edges~$\{w,p\}, \{p,n\}$. We pick the unique growing $\mathrm{gr}$ of $T$ that starts with~$w$. We further pick summing regions $P$ and $P'$ as decoration for  $\{w,p\}$ and~$\{p,n\}$, respectively as follows:
both are $4$--gons, $P\subset \Sigma$ is a neighborhood of~$a$, $P,P'\subset A_p$ are disjoint and are neighborhoods of the cocore of $A_p$ and $P'\subset A_n$ is a neighborhood of a cocore of~$A_p$.

Everything is set up such that the open book associated with the tree of open books $T$ and the growing $\emph{gr}$ is the open book resulting from figure-eight plumbing of $(\Sigma,\phi)$ along~$a$.
\end{example}

Before moving on to discussing the main result of the section, namely a sufficient condition that the open books associated with a tree of open books has no essential fixed arcs, we prove that essential figure-eight plumbing preserves having no essential arcs; i.e., we prove \Cref{prop:primnessoffigure8plumbing}.
\begin{proof}[Proof of \Cref{prop:primnessoffigure8plumbing}]
Let $w=(\Sigma,\phi)$ be an open book and let $a$ be an essential arc in $\Sigma$ such that all essential fixed arcs of $(\Sigma,\phi)$ have non-zero intersection with~$a$.
We use the tree of open books $T$ and its growing $\mathrm{gr}$ given in \Cref{ex:fig8plumb} to describe the figure-eight plumbing of $w$ along~$a$.

We denote the result of plumbing $p$ to $w$ by~$(\Sigma_1,\phi_1)$. Now we apply \Cref{prop:nofixedarccriterion} (rather, its analog with left in place of right) to the Murasugi sum of $(\Sigma_1,\phi_1)$ and $n$ along~$P'$, so we check the necessary assumptions. Firstly, the plumbing region $P'$ is essential, hence it is an essential Murasugi sum. Secondly, all essential arcs in $\Sigma_1$ that are contained in $P'\subset \Sigma_1$ go to the right since they are isotopic to a cocore of the annulus which is mapped by the right-veering Dehn twist.
Finally, we remark that, if $(\Sigma_1,\phi_1)$ has fixed essential arcs (which is possible, see e.g.~\Cref{ex:counterexample}), then they all have non-trivial intersection with~$P'$, since all fixed arcs of $w$ run through~$P$. Thus the criterion applies to the open book $(\Sigma_1,\phi_1)$ and the strictly left-veering open book $n$ with summing region~$P'$, and, hence, $(\Sigma_1,\phi_1)\#_{P'}n$ has no essential fixed arcs.

Note that both the construction and the proof carry through if we first plumb the negative Hopf band and then the positive one.
\end{proof}
Using \Cref{rem:fixarcsdesc} in place of \Cref{prop:nofixedarccriterion}, we find that fixed arcs in figure-eight plumbings are the fixed arcs that arise from the original surface.
\begin{remark}\label{rem:fixarcsinfig8} Let $(\Sigma,\phi)$ be an open book and $a$ an essential arc in~$\Sigma$. Then every fixed essential arc in the open book arising from figure-eight plumbing of $(\Sigma,\phi)$ along $a$ can be isotoped (fixing the boundary setwise) into~$\Sigma\setminus P$.

\end{remark}

\subsection{Conjugation of open books and independence of the choice of growing}

Recall that two open books 
$(\Sigma_1,\phi_1)$ and $(\Sigma_2,\phi_2)$ are said to be \emph{conjugate} if there exists an orientation preserving diffeomorphism $f\colon \Sigma_1\to \Sigma_2$ such that~$\phi_2\circ f=f\circ\phi_1$. 

\begin{lemma} Let $T$ be a tree of open books. For every two growings of~$T$, the associated open books are conjugate. 
\end{lemma}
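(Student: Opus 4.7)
The plan is to proceed by induction on the number $n$ of vertices of $T$, with the base case $n=1$ trivial. For the inductive step I fix any leaf $\ell$ of $T$ and establish the key claim that any growing of $T$ may be transformed, through operations that change the associated monodromy only by conjugation, into a growing that ends with $\ell$. Granted this, given two growings $\mathrm{gr}_1,\mathrm{gr}_2$ of $T$, both can be brought to end with $\ell$, and the first $n-1$ entries of each form growings of $T'\coloneqq T\setminus\{\ell\}$. By the inductive hypothesis the corresponding open books on the subsurface $\Sigma'\subset\Sigma$ (obtained by iteratively plumbing along $T'$) are conjugate via some diffeomorphism $f'$; after isotoping $f'$ to be the identity on the summing region $P$ at which $\ell$ is plumbed, I extend $f'$ by the identity on $\Sigma_\ell$ to a diffeomorphism $f$ of $\Sigma$ that conjugates the full monodromies, because $f$ and $\phi_\ell$ now have disjoint supports and hence commute.

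The technical engine behind the reduction is the observation that whenever two vertices $u,v$ of $T$ are non-adjacent in $T$, the pages $\Sigma_u$ and $\Sigma_v$ are disjoint subsurfaces of $\Sigma$: two pages overlap only along a summing region labelling an edge between them, and summing regions meeting at a common vertex are disjoint by the definition of a tree of open books. Consequently the extensions $\phi_u,\phi_v\colon\Sigma\to\Sigma$ have disjoint supports and commute in $\mathrm{Diff}(\Sigma)$.

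To carry out the reduction I write $\mathrm{gr}=(u_1,\dots,u_n)$ and let $k$ denote the position of $\ell$. If $k\geq 2$, the unique $T$--neighbour of the leaf $\ell$ must appear at some position $<k$ (as the parent of $\ell$ in $\mathrm{gr}$), so $\ell$ is non-adjacent in $T$ to each of $u_{k+1},\dots,u_n$; successive swaps of $\ell$ with these later entries produce valid growings at each step and preserve the monodromy by the observation above. If instead $k=1$, then $u_2$ must be the unique $T$--neighbour of $\ell$, and I consider the cyclic shift $\mathrm{gr}'\coloneqq(u_2,u_3,\dots,u_n,\ell)$. For each $i\geq 3$, the parent of $u_i$ in $\mathrm{gr}$ cannot have been $\ell$ (since $\ell$'s only $T$--neighbour is $u_2\neq u_i$), so it lies in $\{u_2,\dots,u_{i-1}\}$ and remains at an earlier position in $\mathrm{gr}'$; moreover $\ell$ at position $n$ in $\mathrm{gr}'$ has its neighbour $u_2$ at position $1$. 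Hence $\mathrm{gr}'$ is a valid growing, and its monodromy
\[\phi_{\mathrm{gr}'}=\phi_\ell\circ\phi_n\circ\cdots\circ\phi_2=\phi_\ell\circ\phi_{\mathrm{gr}}\circ\phi_\ell^{-1}\]
is conjugate to $\phi_{\mathrm{gr}}$ via the orientation-preserving diffeomorphism $\phi_\ell$.

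The main delicacy I expect is the disjoint-subsurfaces claim for non-$T$--adjacent pages, which reduces to the disjoint-summing-regions condition in the definition of a tree of open books together with a careful unpacking of how the iterative gluing realises each $\Sigma_{u_i}$ as a subsurface of $\Sigma$; once that is in hand, the remainder is tree combinatorics and elementary bookkeeping with compositions of commuting diffeomorphisms.
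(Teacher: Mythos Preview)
Your reduction step---moving a fixed leaf $\ell$ to the last position of any growing via commutations of non-adjacent letters and at most one cyclic shift---is correct and is the combinatorial core of the argument. The gap lies in the inductive step that follows. You assert that the conjugating diffeomorphism $f'\colon\Sigma'\to\Sigma'$ provided by the inductive hypothesis can be isotoped to the identity on the summing region~$P$, but this is neither justified nor true in general: even if $f'$ is taken to fix $\partial\Sigma'$ pointwise, it may carry the essential-arc sides of $P$ to arcs not isotopic (rel endpoints) to the original ones, so the mapping class of $f'$ relative to $P$ need not be trivial. More fundamentally, the inductive hypothesis is too weak for the use you make of it: from ``$\psi_1$ and $\psi_2$ are conjugate'' one cannot in general conclude ``$\phi_\ell\psi_1$ and $\phi_\ell\psi_2$ are conjugate'', since left-multiplication by a fixed element does not preserve conjugacy classes.

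The paper avoids the subsurface altogether: it observes (as you do) that $\phi_u$ and $\phi_v$ have disjoint support, hence commute in $\mathrm{Diff}^+(\Sigma,\partial\Sigma)$, whenever $u,v$ are non-adjacent in~$T$, and then appeals to a purely group-theoretic lemma asserting that for elements $x_1,\dots,x_n$ of any group with such tree-commutation relations, the products coming from any two growings are conjugate. Your argument can be repaired in this spirit. First prove, by your leaf-peeling induction, the stronger statement that growings with the \emph{same} first vertex give \emph{equal} monodromies (pick a leaf $\ell$ distinct from the common first vertex, move it to the end by commutations alone, and recurse; here the extension step is vacuous since $f'=\mathrm{id}$). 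Then handle a change of first vertex separately: for adjacent $r,r'$ one passes between suitable growings starting at $r$ and at $r'$ by a single cyclic shift together with commutations, and one chains along the path in $T$ between the two starting vertices.
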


\begin{proof}

Recall the description of the monodromy $\phi$ of the open book associated to a tree $T$ of open books and a growing of $T$ as $\phi\coloneqq \phi_n\circ\phi_{n-1}\circ\dots\circ \phi_1$, where all the $\phi_i$ are diffeomorphisms of $\Sigma$ and the order of appearance depends on the growing. Note that $\phi_i$ and $\phi_j$ commute (in the group of orientation preserving diffeomorphisms of~$\Sigma$), whenever $\{v_i,v_j\}$ is not an edge of~$T$, because then $\phi_i$ and $\phi_j$ have disjoint support.
Thus the result follows from the following elementary fact about group elements.
\end{proof}

\begin{lemma}
Let $x_1,\dots,x_n$ be elements in a group.
Let $T$ be a tree with vertex set $\{x_1,\dots x_n\}$ such that $x_i$ commutes with $x_j$ if $\{x_i,x_j\}$ is not an edge of the tree.
Let $\mathrm{gr}_1$ and $\mathrm{gr}_2$ be two growings of $T$ and let $w_1$ and $w_2$ be the words associated to $\mathrm{gr}_1$ and~$\mathrm{gr}_2$, respectively. Then, then there exists a cyclic permutation of $w_2$ that represents the same group element as~$w_1$, in particular, the elements defined by $w_1$ and $w_2$ are conjugate.\qed
\end{lemma}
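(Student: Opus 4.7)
I plan to argue by induction on $n = |V|$, with the trivial base case $n = 1$. For the inductive step, pick any leaf $v$ of $T$ and let $v^*$ denote its unique neighbor in $T$; such a leaf exists since $T$ is a non-trivial tree.

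The first step is a normalization: for each growing word $w_i$ associated with $\mathrm{gr}_i$, I claim $w_i$ equals, as a group element, a word of the form $x_v \cdot u_i$, where $u_i$ is the word of a growing of $T - v$. Indeed, if $v$ is added at step $k$ of $\mathrm{gr}_i$, the letters strictly to the left of $x_v$ in $w_i$ correspond to vertices added after $v$; since $v$'s only neighbor $v^*$ must have been added before $v$ (being $v$'s parent in the rooted tree), none of these later vertices is adjacent to $v$ in $T$, so their letters commute with $x_v$ and I can slide $x_v$ to the leftmost position using commutations only. Deleting $v$ from the growing sequence---valid since $v$ is a leaf---then yields a growing of $T - v$ whose word is exactly $u_i$.

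Next, I apply the inductive hypothesis to $T - v$, observing that the commutation relations there coincide with those of $T$ among non-$v$ letters. This yields that $u_1$ and $u_2$ are equivalent as cyclic words in the trace monoid of $T - v$, i.e.~related by a finite sequence of elementary moves, each either an adjacent commutation or a one-letter cyclic rotation. The plan is to lift this sequence step-by-step to a sequence relating $w_1 = x_v u_1$ and $w_2 = x_v u_2$ in the trace monoid of $T$, keeping $x_v$ glued just to the left of $x_{v^*}$ throughout. Using that $x_v$ commutes with every letter other than $x_{v^*}$, one checks three cases: (i) a commutation of two non-$v^*$ letters in $u_i$ lifts verbatim; (ii) a commutation of $x_{v^*}$ with a non-$v^*$ letter $x_b$ in $u_i$ lifts to two legitimate commutations in $w_i$, swapping $x_b$ first past $x_v$ and then past $x_{v^*}$; (iii) a cyclic rotation of $u_i$ by one letter lifts to a cyclic rotation of $w_i$ by one letter, except when the rotated letter is $x_{v^*}$, in which case one rotates by two letters so $x_v$ travels together with $x_{v^*}$.

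The main obstacle is the careful case analysis in (i)--(iii), particularly verifying that the ``$x_v$ just before $x_{v^*}$'' invariant is preserved and that each lifted move is legitimate in the trace monoid of $T$. Once this is done, the cyclic traces of $w_1$ and $w_2$ in the trace monoid of $T$ coincide, which means precisely that some cyclic permutation of $w_2$ equals $w_1$ as a group element. The ``in particular'' clause on conjugacy is immediate, since any cyclic permutation of a word $w = u v$ takes the form $v u = u^{-1} w u$ and is therefore conjugate to $w$.
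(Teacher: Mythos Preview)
The paper does not supply a proof of this lemma; the \qed placed inside the statement signals it is left as an elementary exercise (the preceding lemma's proof refers to it as ``the following elementary fact about group elements'').  So there is nothing in the paper to compare your argument against.

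Your inductive approach via the trace monoid is sound and yields a proof, modulo two small points.  First, your normalization step tacitly assumes $v^*$ was added before $v$ in $\mathrm{gr}_i$, which fails precisely when $v$ is the starting vertex of the growing; in that case $x_v$ sits at the far right of $w_i$ with $x_{v^*}$ immediately to its left, and cannot be commuted leftward.  Since you are ultimately proving cyclic equivalence, a single cyclic rotation moves $x_v$ to the far left, after which the remaining letters form the word of a growing of $T-v$ rooted at $v^*$, and your argument resumes.  Second, your lifting step requires an explicit chain of commutations and rotations connecting $u_1$ and $u_2$, so the induction hypothesis must be cyclic equivalence in the trace monoid of $T$ (equivalently, in the right-angled Artin group on the complement of $T$), not merely equality in the fixed ambient group, which could in principle hold for extraneous reasons.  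You already speak of the trace monoid, so this is only a matter of stating the strengthened hypothesis explicitly at the start of the induction; it then immediately implies the lemma as phrased.
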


\subsection{Primeness result}
Finally we state and prove our main primeness result for trees of open books with strictly veering vertices.
\begin{theorem}\label{thm:prime_trees}
Let $T$ be a tree of open books, where each open book is strictly left-veering or strictly right-veering. Then the associated open book, well-defined up to conjugacy, has no fixed essential arcs; in particular, the fiber surface is a prime surface in the associated $3$--manifold.
\end{theorem}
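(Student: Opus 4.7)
The plan is to combine the primeness criterion \Cref{prop:nofixedarccriterion} with an inductive decomposition of $T$ into monochromatic blocks. The key preliminary, which I treat as a black box and expect to be established separately as \Cref{lemma:prime_trees} in \Cref{sec:braids}, is the statement $(\ast)$: \emph{if every vertex of a tree of open books is strictly right-veering (respectively, strictly left-veering), then so is the associated open book}. Granting $(\ast)$, I partition the vertex set of $T$ into \emph{blocks}, i.e., maximal connected subtrees of $T$ whose vertices share a common strict veering type. By $(\ast)$, each block $V$ has an associated open book $(\Sigma_V, \phi_V)$ that is strictly veering of $V$'s type; by maximality, adjacent blocks in the resulting \emph{block tree} have opposite veering types.

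I then induct on the number of blocks. The base case of a single block is immediate from $(\ast)$, since a strictly veering open book contains no fixed essential arc (a fixed arc veers neither left nor right). For the inductive step, pick a leaf block $V$ of the block tree with unique neighboring block $V'$, of opposite veering type. Let $T'$ denote $T$ with the vertices of $V$ deleted; its associated open book $(\Sigma', \phi')$ has no fixed essential arc by the inductive hypothesis. The open book of $T$ is then the essential Murasugi sum of $(\Sigma', \phi')$ and $(\Sigma_V, \phi_V)$ along the summing region $P$ attached to the unique $T$-edge between $V$ and $V'$, well-defined up to conjugacy. I verify the three hypotheses of \Cref{prop:nofixedarccriterion} (with the roles of left and right swapped if $V$ is strictly left-veering): hypothesis (i) holds vacuously by induction, and hypothesis (ii) is $(\ast)$ applied to $V$. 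For hypothesis (iii)---that every essential arc in $P\subset\Sigma'$ veers under $\phi'$ opposite to $V$'s type---I exploit the fact that $P$ lies in $\Sigma_{V'}$ and that, because the summing regions at each vertex of $T$ are pairwise disjoint, any essential arc $\gamma\subset P$ is disjoint from $\Sigma_{V''}$ for every block $V''\neq V'$ of $T'$. Choosing a growing of $T'$ that processes all vertices of $V'$ after those of the other blocks (leveraging, where needed, the commutation of vertex monodromies with disjoint support), the factorization $\phi' = \phi_{V'}\circ\psi$ has $\psi$ fixing $\gamma$, so $\phi'(\gamma) = \phi_{V'}(\gamma)$, which by $(\ast)$ applied to $V'$ veers in $V'$'s direction, opposite to $V$'s.

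Conjugation invariance of ``has no fixed essential arc'' then transfers the conclusion to the open book of $T$ obtained from any growing, and primeness is an immediate corollary since a fixed essential separating arc is in particular a fixed essential arc. The principal obstacle I anticipate is the preliminary $(\ast)$, whose proof is deferred to \Cref{sec:braids}; I expect its proof to reduce to the fact that right-veering diffeomorphisms form a monoid under composition (in the spirit of Honda--Kazez--Mati\'c) together with an argument that strictness is preserved under essential Murasugi summing. A secondary subtlety lies in confirming the existence of the desired growing of $T'$, particularly when $V'$ has high degree in the block tree of $T'$ so that $T'\setminus V'$ is disconnected; in that case one must use the commutation relations among monodromies of non-adjacent vertices of $T$ to achieve the claimed factorization of $\phi'$ on $\gamma$.
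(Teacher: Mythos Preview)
Your approach is essentially the paper's: partition $T$ into maximal monochromatic subtrees (the paper's $T_1,\dots,T_n$), invoke $(\ast)$ to make each block strictly veering (this is the paper's \Cref{lemma:prime_trees}, located in \Cref{sec:trees} rather than \Cref{sec:braids}), form the bipartite block tree (the paper's $B$), and apply the criterion inductively along a growing. The paper does not spell out the verification of hypothesis~(iii) at all, so on that point you are already more careful.

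That said, your argument for hypothesis~(iii) does not work as stated. The factorization $\phi'=\phi_{V'}\circ\psi$ with $\psi$ a product of the remaining vertex monodromies cannot in general be achieved, even allowing commutation: if $V'=\{v'\}$ is a single vertex of degree $\geq 2$ in $T'$, then $v'$ is not a leaf of $T'$, so no growing places it last, and $\phi_{v'}$ does not commute with any of its neighbors' monodromies, so you cannot slide it to the left either. Consequently the equality $\phi'(\gamma)=\phi_{V'}(\gamma)$ is false in general. The remedy is simpler than the route you sketch. Note that $\gamma\subset P$ is disjoint from $\Sigma_u$ for \emph{every individual vertex} $u\neq v'$ of $T'$ (not just for the other blocks), so each such $\phi_u$ fixes $\gamma$ itself. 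Since any boundary-fixing diffeomorphism preserves the order $\leq$ on arcs based at a fixed boundary point, you get, for any growing $w_1,\dots,w_n$ of $T'$ with $v'=w_m$,
\[
\phi'(\gamma)=\phi_{w_n}\!\circ\cdots\circ\phi_{w_{m+1}}\bigl(\phi_{v'}(\gamma)\bigr)\ \leq\ \phi_{w_n}\!\circ\cdots\circ\phi_{w_{m+1}}(\gamma)=\gamma,
\]
using $\phi_{v'}(\gamma)\leq\gamma$ and, at each subsequent step, $\phi_{w_i}(\gamma)=\gamma$ together with order-preservation. This sandwiching replaces your factorization entirely and dissolves the ``secondary subtlety'' you flag.
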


In the case that all open books are strictly right-veering, the resulting open book is strictly right-veering and thus, in particular, it has no fixed essential arcs.

\begin{lemma}\label{lemma:prime_trees}
If $(\Sigma_1,\phi_1)$ and $(\Sigma_2,\phi_2)$ are strictly right-veering open books, then the result of essential Murasugi sum of the two is strictly right-veering.
More generally,
let $T$ be a tree of open books, where each open book is strictly right-veering. Then the associated open book, well-defined up to conjugacy, is strictly right-veering.
\end{lemma}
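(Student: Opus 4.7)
I plan to first prove the base case of the Murasugi sum of two open books (the first sentence) and then deduce the general tree statement by induction on the number of vertices.

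For the base case, let $\gamma$ be an essential arc in $\Sigma = \Sigma_1\cup_P\Sigma_2$ with starting endpoint $p$; I want $\phi(\gamma) > \gamma$ at $p$, where $\phi=\phi_2\circ\phi_1$. After a small isotopy of $\gamma$ along $\partial\Sigma$ arranging $p$ away from corners of $P$, either $p\in\partial\Sigma_1\setminus P$ or $p\in\partial\Sigma_2\setminus P$; the two cases are symmetric, so I handle the first. Since $p\notin\Sigma_2$, the diffeomorphism $\phi_2$ acts as the identity in a neighborhood of $p$ in $\Sigma$, so the germs at $p$ of $\phi(\gamma)=\phi_2(\phi_1(\gamma))$ and $\phi_1(\gamma)$ agree. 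Hence it suffices to establish $\phi_1(\gamma) > \gamma$ at $p$ in~$\Sigma$.

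To exploit the strict right-veering of $\phi_1$ on $\Sigma_1$, I consider the component $\gamma_1$ of $\gamma\cap\Sigma_1$ containing $p$: a proper arc in $\Sigma_1$ sharing its initial germ at $p$ with $\gamma$. In the case $\gamma_1$ is essential in $\Sigma_1$, strict right-veering of $\phi_1$ gives $\phi_1(\gamma_1)>\gamma_1$ at $p$ in $\Sigma_1$, and the common initial germ at $p$ between $\gamma$ and $\gamma_1$ (respectively $\phi_1(\gamma)$ and $\phi_1(\gamma_1)$) promotes this to $\phi_1(\gamma)>\gamma$ at $p$ in $\Sigma$, as desired. The main obstacle is the subcase where $\gamma_1$ is boundary-parallel in $\Sigma_1$; then $\phi_1$ fixes $\gamma_1$ up to isotopy, providing no rightward push locally. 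I plan to resolve this by induction on the minimum intersection number $|\gamma\cap\partial P|$ in the isotopy class of $\gamma$: in the boundary-parallel subcase, the isotopy witnessing boundary-parallelism of $\gamma_1$ in $\Sigma_1$ can be used to push the initial portion of $\gamma$ across the even sides of $P$ it traverses and into $\Sigma_2$, thereby strictly decreasing $|\gamma\cap\partial P|$ while preserving essentiality of $\gamma$ in $\Sigma$ and its initial germ at $p$. Here the essentiality of the Murasugi sum (odd sides of $P$ are essential in $\Sigma_i$) is used to ensure this modification is well-defined and genuinely reduces the intersection count. Iterating terminates at the essential subcase already handled.

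For the tree statement, I induct on the number $n$ of vertices of $T$. The case $n=1$ is immediate. For $n\geq 2$, select a growing of $T$ that terminates at a leaf $v_n$, and let $T'$ be the subtree on the remaining $n-1$ vertices. By the induction hypothesis, the open book associated with $T'$ is strictly right-veering; the open book associated with $T$ arises as its essential Murasugi sum with the strictly right-veering $v_n$ along the summing region labeling the edge connecting $v_n$ to $T'$. Applying the base case concludes the inductive step. The conjugacy class independence from the choice of growing is provided by the preceding lemmas in the section.
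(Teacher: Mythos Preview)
There is a genuine gap in your base case, at the reduction step. The inference ``the germs at $p$ of $\phi(\gamma)$ and $\phi_1(\gamma)$ agree, hence it suffices to establish $\phi_1(\gamma) > \gamma$'' is not valid: the relation $>$ is a property of isotopy classes rel endpoints and is only determined after the arcs are put into minimal position, so two arcs sharing a germ at $p$ may have different order relations with a third arc~$\gamma$. Worse, the target statement $\phi_1(\gamma) > \gamma$ (for the extension $\phi_1\colon\Sigma\to\Sigma$) is simply false in general. For instance, if the first component $\gamma_1$ of $\gamma\cap\Sigma_1$ is boundary-parallel in $\Sigma_1$ and the remainder $\gamma\setminus\gamma_1$ lies entirely in $\Sigma_2\setminus P$, never re-entering~$\Sigma_1$, then $\phi_1(\gamma)$ is isotopic to $\gamma$ rel endpoints in~$\Sigma$, so $\phi_1(\gamma) = \gamma$, not $\phi_1(\gamma) > \gamma$. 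Your inductive handling of the boundary-parallel subcase therefore cannot succeed as stated---you are attempting to prove a false statement---and your description of how the isotopy reduces $|\gamma\cap\partial P|$ while keeping $p\in\partial\Sigma_1\setminus P$ fixed is in any case unclear.

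The paper proceeds differently. It asserts that the extensions $\phi_1,\phi_2\colon\Sigma\to\Sigma$ are right-veering---not strictly---and then uses transitivity: $\phi(a)=\phi_2(\phi_1(a))\geq\phi_1(a)\geq a$, so $\phi$ is right-veering. Strictness is obtained by contradiction: if $\phi(a)=a$ for some essential~$a$, both inequalities are equalities; from $\phi_1(a)=a$ together with strict right-veeringness of $\phi_1|_{\Sigma_1}$ one deduces that $a$ may be isotoped (moving freely on the boundary) into $\Sigma\setminus\Sigma_1\subset\Sigma_2$, where essentiality of the summing region forces $a$ to be essential in~$\Sigma_2$; then strict right-veeringness of $\phi_2|_{\Sigma_2}$ yields $\phi(a)=\phi_2(a)>a$, the desired contradiction. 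Your analysis of the first component $\gamma_1$ is in fact relevant to justifying that the extensions are right-veering, but that is a weaker (and true) claim than the one you aim for, and it must then be combined with transitivity and the contradiction step rather than with a germ argument. Your induction for the tree statement is fine and matches the paper's.
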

\begin{proof}
For the first statement, take $(\Sigma_1,\phi_1)$ and $(\Sigma_2,\phi_2)$ to be strictly right-veering open books and let $(\Sigma,\phi)$ the result of essential Murasugi sum of the two. We show that $\phi$ is strictly right-veering.
We consider $\phi_1$ and $\phi_2$ as diffeomorphisms of $\Sigma$ (by extending by the identity), and note that they are right-veering.
Let us now consider an essential arc $a$ in~$\Sigma$. We need to show that $\phi(a)=\phi_2(\phi_1(a))>a$.

Assume towards a contradiction that~$\phi(a)=\phi_2(\phi_1(a))=a$.
Since $\phi_1(a)\geq a$ and $\phi_2(\phi_1(a))\geq \phi_1(a)$ (by right-veeringness of~$\phi_1$ and~$\phi_2$, respectively), the inequalities $\phi(a)=\phi_2(\phi_1(a))\geq \phi_1(a)\geq a$, must both be equalities; in particular, we have~$\phi_1(a)=a$. However, together with strict right-veeringness of $\phi_1\colon \Sigma_1\to \Sigma_1$, this implies that $a$ can be isotoped (moving freely on the boundary) to an arc in~$\Sigma\setminus \Sigma_1$. In particular,~$\phi(a)=\phi_2(a)$. Moreover, since the Murasugi sum is essential, $a$ must be an essential arc in~$\Sigma_2$. However, now strict right-veeringness of $\phi_2\colon \Sigma_2\to \Sigma_2$, implies $(\phi(a)=)\phi_2(a)>a$, which contradicts~$\phi(a)=a$.

The more general statement for trees follows from the first part by induction, where the case of $T$ consisting of two vertices corresponds to the first part.
\end{proof}
\begin{remark}\label{rmk:connectiontoItosproof}
We note that \Cref{lemma:prime_trees} is enough to recover Cromwell's result `visual-primeness of positive braid links'. This relates to Ito's argument for Cromwell's result. Indeed, \Cref{lemma:prime_trees}, amounts to the observation that essential Murasugi sums of strictly right-veering open books are strictly right-veering. A special case of this observation---plumbing a positive Hopf band (in particular, a strictly right-veering open book) to an open book with positive monodromy (in particular, right-veering) that has no fixed essential arcs (hence is strictly right-veering), yields an open book that has no essential fixed arcs---was the input Ito used for his proof.
\end{remark}

For the general case of \Cref{thm:prime_trees}, we consider subtrees of $T$ for which the vertices are all either strictly left or strictly right-veering, do the corresponding Murasugi summing and consider a new tree $B$ in which the vertices consist of the result of these summings and edges are between open books of oppositely-handed veeringness. We then show that the open book associated with $B$ has no essential fixed arc by iterative application of \Cref{prop:nofixedarccriterion}. Here are the details. 

\begin{proof}[Proof of \Cref{thm:prime_trees}]
Let $T$ be a tree of open books, where each open book is strictly left-veering or strictly right-veering. As a first step, we consider the maximal subtrees $T_1,\dots,T_n$ of $T$ for which the vertices are either all strictly right-veering or they are all strictly left-veering.

Next we choose a growing of $T$ such that whenever we have grown a vertex that lies in a $T_i$ or we start with a vertex that lies in~$T_i$, for some~$1\leq i\leq k$, we grow the rest of the vertices that lie in $T_i$ before other vertices are grown. We prove that the open book associated to this tree with this growing has no essential fixed arcs.

This growing of $T$ restricts to growings of the~$T_i$. Let the corresponding open books be denoted by~$v_i=(\Sigma_i, \phi_i)$. Note that each $v_i$ is strictly right-veering or strictly left-veering as an iterative Murasugi sum of strictly right-veering or strictly left-veering open books; see \Cref{lemma:prime_trees}.
Let $B$ be the tree of open books with vertices the $v_i$ and edges and labeling induced by~$T$, the latter meaning the following. Whenever an edge $e=\{v,w\}$ of $T$ is such that $v$ is a vertex in some $T_i$ and $w$ is a vertex in a $T_j$ for some~$1\leq i<j \leq k$, then $\{v_i,v_j\}$ is an edge of $B$ and the labeling is the same as the labeling of $e$ with the embeddings of the polygon extended (by canonical inclusion) to the surfaces $\Sigma_i$ and~$\Sigma_j$. The tree $B$ is in fact bipartite with respect to grouping the vertices into strictly right-veering and strictly left-veering.

Next, we note that the growing of $T$ induces a growing of $B$ and that the open book associated with $T$ and its chosen growing can be canonically identified with the open book associated with $B$ and the corresponding growing. Hence, it remains to show that the open book associated with $B$ and its growing has no essential fixed arc.
Since each edge in $B$ is between open books where one is strictly right-veering and the other is strictly left-veering, each Murasugi sum in the iterative construction of the open book $(\Sigma,\phi)$ associated with $B$ and its growing satisfies \Cref{prop:nofixedarccriterion} (or its analog, where `left' and `right' are exchanged). Therefore, $(\Sigma,\phi)$ has no essential fixed arcs. \end{proof}

\subsection{Arborescent fibered links are prime}

As an application of \Cref{thm:prime_trees} and an illustration of examples of trees of open books, we discuss the primeness of arborescent fibered links.
We do so without recalling the construction of arborescent links in detail, as we see this as an illustration of the technique rather than a main result of the text. In particular, since primeness for most arborescent links can be derived from most of them being hyperbolic, as discussed in \cite{Futer_2008}, we do not claim originality but rather a different perspective.
For a general reference for the construction of arborescent links (certain links associated with a plane tree with integer labeled vertices) we refer to~\cite[Ch.~12]{bs}.
\emph{Fibered arborescent links} are the links $L_G$ in $S^3$ (well-defined up to ambient isotopy) associated to a plane tree $G$ with $\pm$--labeled vertices, i.e.~a choice of embedding of a finite tree with vertices labeled by elements in~$\{+,-\}$. The link $L_G$ can be described by placing a Hopf band in the plane at each vertex (with the sign of the Hopf band being dictated by the sign of the corresponding vertex) and plumbing them together whenever they share an edge; compare with~\cite{MR2165205} where this construction is explained, and consult~\cite[Ch.~12]{bs} for a detailed account. 
In the language of Murasugi sums, arborescent links are those arising from plumbing together oriented unknotted annuli (understood as open books for $S^3$). They are fibered if each of the involved annuli is fibered (since Murasugi sums have fibered boundary if and only if the pieces have fibered boundary~\cite[Theorem~4]{MurasugiSum}); hence, fibered arborescent links are those that arise if each annulus is a Hopf band.

It is not hard to see that every plane $\pm$--labeled tree $G$ in fact determines a tree of open books~$T$, where the vertices are Hopf band monodromies and the plumbing regions are neighborhoods of essential arcs in the annuli, such that the binding of the open book associated $T$ is, up to ambient isotopy,~$L_G$. The construction is detailed in the next paragraph.

The vertices of $T$ are chosen to be open books~$(A,\tau^\pm)$, where $A$ is an annulus and $\tau^\pm$ is a positive/negative Dehn twist, with sign chosen according to the label. The edges are chosen according to the edges of~$G$. The choice of labeling of the edges is where all the subtlety lies. The fact is that the plane tree $G$ induces a circular order on the edges at a vertex of $G$ (and hence of $T$). We choose the plumbing regions $P_1, \ldots, P_n$ in an annulus $A$ that is the surface of a vertex $v=(A,\tau^\pm)$ of $T$ and we take as the labels of the edges $e_1$ to $e_n$ that contain $v$ to be $P_1, \ldots, P_n$ in any order such that 
the circular order induced by the orientation of $A$ equals the circular order induced by~$G$. This process yields an open book well-defined up to equivalence, as the following argument shows. Any other choice of order of the $P_i$ that also respects the circular order described above, amounts to changing the plumbing regions by applying an orientation preserving diffeomorphism of~$A$, hence resulting in a tree of open books for which the corresponding open book is equivalent to that of~$T$.

\begin{remark}
    Note that without a choice of circular order at each vertex, e.g.~if $G$ was taken to be a $\pm$--labeled tree without preferred embedding into the plane, different choices of plane embeddings do actually lead to non-isotopic links in~$S^3$~\cite{gerber}.
    \end{remark}

    With this setup, the fact that fibered arborescent links are prime is immediate from \Cref{thm:prime_trees}.

\begin{proposition}
\label{prop:arblinksareprime}
All fibered arborescent links are prime.
\end{proposition}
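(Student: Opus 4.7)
The plan is to realize the fibered arborescent link $L_G$ as the binding of the open book associated with the tree of open books $T$ constructed immediately before the statement from the plane $\pm$–labeled tree $G$, and then to invoke \Cref{thm:prime_trees} directly. By construction, $T$ already has the right shape: its vertices are pairs $(A,\tau^{\pm})$, where $A$ is an annulus and the monodromy is a positive or negative Dehn twist about the core, with the sign dictated by the label of the corresponding vertex of $G$.

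The key step is therefore to verify the hypothesis of \Cref{thm:prime_trees}, namely that each vertex open book is either strictly right-veering or strictly left-veering. This reduces to the following well-known computation about Hopf bands: on the annulus $A$, every essential properly embedded arc is isotopic (relative to its endpoints) to the cocore, and a positive Dehn twist about the core sends the cocore to an arc that veers strictly to the right, while a negative Dehn twist sends it strictly to the left. I would spell this out by choosing a standard model of $A$ and the cocore, applying $\tau^{\pm}$ explicitly, and checking that at the starting point the frame (tangent of image, tangent of original) is positively or negatively oriented. Consequently, $(A,\tau^+)$ is strictly right-veering and $(A,\tau^-)$ is strictly left-veering, as required.

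With the hypotheses of \Cref{thm:prime_trees} verified, the conclusion gives that the open book associated with $T$ (well-defined up to conjugacy by the independence of growing) has no fixed essential arcs, and so its binding is a prime link in the associated $3$–manifold. Since this binding is $L_G\subset S^3$ up to ambient isotopy by the construction recalled in the paragraphs preceding the proposition, we conclude that $L_G$ is prime.

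I do not expect a substantial obstacle: the strict veeringness of the two Hopf band open books is a short direct check, and all of the real work (preserving the absence of fixed essential arcs under iterated essential Murasugi sum of strictly veering pieces, and the well-definedness of the construction up to the choice of growing) is already packaged inside \Cref{thm:prime_trees}. The only point requiring mild care is keeping track of the compatibility between the circular order at each vertex of $G$ and the cyclic order of plumbing regions around a vertex of $T$, but this has already been addressed in the construction of $T$ and only affects the isotopy type of $L_G$, not its primeness.
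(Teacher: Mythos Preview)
Your proposal is correct and follows essentially the same route as the paper: identify $L_G$ with the binding of the open book associated to the tree of open books $T$ built from $G$, and invoke \Cref{thm:prime_trees}. The paper's proof is in fact even terser, taking the strict veeringness of the Hopf band open books as understood; your extra verification of that point is harmless.
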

    \begin{proof}[Proof of \Cref{prop:arblinksareprime}]
     Let $L$ be a fibered arborescent link, i.e.~up to isotopy, the link arises as the link $L_G$ associated with a plane $\pm$--labeled tree~$G$. Since the link $L_G$ is the binding of the open book defined (up to conjugacy) by the tree of open book $T$  built from $G$ as discussed above, it is prime by \Cref{thm:prime_trees}.
    \end{proof}

\section{Homogeneous braids} \label{sec:braids}

In general, a link diagram that arises as the closure of a braid diagram representing a non-prime link will not readily allow the observation that the link is not prime. While it is known that the necessary isotopy can be chosen to be well adapted to the braid axis---namely a composition of applications of the braid relations and conjugations and so-called exchange moves suffice~\cite[The composite braid theorem]{BirmanMenasco}---as far as the authors know there are no a priori bounds on how many applications of braid relations, conjugations and exchange moves are needed.

In contrast, \cref{thm:main} states that if a diagram of a link arises as the closure of a braid diagram that is homogeneous, then if the link represented by the diagram is not prime, this can be seen in the diagram by means of a decomposition circle. In this section, we first establish the notation and definitions relevant to~\cref{thm:main}, and then prove it.

\subsection{Preliminaries on decompositions and braids}

For a link~$L$, we call an embedded $2$--sphere $S\subset S^3$ a \emph{decomposition sphere}, if $S$ intersects $L$ transversely as follows: writing $S^3=B_1\cup B_2$ with $B_i\subset S^3$ compact balls with boundary~$S$, for both $i=1$ and~$i=2$, the closed arc component $a$ of $L\cap B_i$ is not boundary parallel in~$B_i\setminus (L\setminus a)$. In other words, for both $i=1$ and~$i=2$, $B_i\cap L$ is not isotopic to the result of split sum of a trivial $1$--tangle with a link. We call a link \emph{prime}, if it does not admit a decomposition sphere. We note that in this definition, the unknot is a prime link, and a split link is prime if and only if its split components are prime.

We call a circle $C \subset S^2$ a \emph{decomposition circle} of a link diagram~$D$, if it intersects $D$ transversely in two points that are not double points of~$D$, as follows: writing $S^2=D_1\cup D_2$ with $D_i\subset S^2$ compact disks with boundary~$C$, for both $i=1$ and~$i=2$, $D_i\cap D$ does not have a component that is a properly embedded closed interval.
We call a decomposition circle \emph{honest} if it gives rise to a decomposition sphere of the corresponding link. See \cref{fig:decompcircles} for an illustration of these notions.%
\begin{figure}[ht]
\centering
\includegraphics{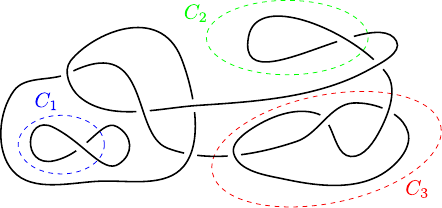}
\caption{A diagram of a split link with two components.
A circle $C_1$ (dotted, blue) that is not a decomposition circle; a dishonest decomposition circle~$C_2$ (dotted, green), and a honest decomposition circle~$C_3$ (dotted, red).}
\label{fig:decompcircles}
\end{figure}%

Artin's \emph{braid group on $n$ strands}~$B_n$ is given by the following presentation~\cite{Artin_25}.
\[
B_n\coloneqq\left\langle \sigma_1, \dots, \sigma_{n-1} \,\left|\,
\begin{array}{l}
\sigma_i\sigma_{i+1}\sigma_i = \sigma_{i+1}\sigma_i\sigma_{i+1} \text{ for } 1 \leq i \leq n-2,\\
\sigma_i\sigma_j = \sigma_j\sigma_i \text{ for } 1 \leq i \leq j - 2 \leq n - 3
\end{array}\right.
\right\rangle.
\]
For the sake of precision, 
we shall make a distinction between \emph{braid words}, which are elements of the free monoid on $\{\sigma_1^{\pm 1}, \dots, \sigma_{n-1}^{\pm 1}\}$, and the braids these words represent in~$B_n$.
By closing off strands, a word $W$ for a braid $\beta$ gives rise to a link diagram~$D(W)$.
The link represented by $D(W)$ is called the \emph{closure} of~$\beta$ or~$W$, denoted~$\cl(\beta)$ or~$\cl(W)$.

Let $W$ be a braid word on $n$ strands, and let~$1\leq i \leq j \leq n-1$.
Then we denote by $W_{ij}$ the braid word on $j-i+2$ strands obtained
from $W$ by deleting all $\sigma_k^{\pm 1}$ with $k < i$ or~$k > j$,
and shifting the index down by~$i-1$, i.e.~replacing each $\sigma_k^{\pm 1}$ with $i\leq k\leq j$ by~$\sigma_{k-i+1}^{\pm 1}$.

The word $W$ is called \emph{split} if for some~$i\in\{1,\ldots, n-1\}$, there are no $\sigma_i^{\pm 1}$ in~$W$.  If $W$ contains $\sigma_{n-1}^{\pm 1}$ precisely once,
then one may produce the $(n-1)$--stranded word $W_{1,n-2}$ by deleting~$\sigma_{n-1}^{\pm 1}$.
Similarly, if $W$ contains $\sigma_1^{\pm 1}$ precisely once,
one may produce $W_{2,n-1}$ by deleting $\sigma_1^{\pm 1}$ and shifting the index down by~$1$.
This is called \emph{(Markov) destabilization}, and its inverse \emph{(Markov) stabilization}.
Stabilization and destabilization leave the closure invariant.

\subsection{Decomposition circles of (homogeneous) braids}
To analyze decomposition circles of diagrams~$D(W)$,
we need to count `how often $W$ alternates between $\sigma_{i-1}^{\pm 1}$ and $\sigma_i^{\pm 1}$ generators'. More precisely, for a braid word $W$ on $n$ strands and~$2\leq i\leq n-1$, let the \emph{$i$--th seesaw number} $g_i(W)$ be defined as follows.
The word $W_{i-1,i}$
can be uniquely written as $V_1 \cdots V_{g_i(W)}$ for some~$g_i(W)\geq 0$, such that $V_{\ell}$ is a non-empty braid word consisting only of $\sigma_{i-1}^{\pm 1}$ if $\ell+\epsilon$ is even, and only of $\sigma_{i}^{\pm 1}$ if $\ell+\epsilon$ is odd, for some fixed~$\epsilon\in\{0,1\}$.

Let us recall the definition of homogeneity.
A braid word $W$ is called \emph{homogeneous}
if $W$ contains no $\sigma_i$ or no $\sigma_i^{-1}$ for each $i \in \{1,\ldots,n-1\}$.
A braid is \emph{homogeneous} if it can be written as a homogeneous braid word.
The following two properties of homogeneous braids quickly follow from their fiber structure~\cite{MR0520522}.
Homogeneous braids are \emph{visually trivial}, i.e.~a homogeneous word represents the unknot if and only if it contains each $\sigma_i^{\pm 1}$ exactly once~\cite{MR1002465}.
Moreover, homogeneous braids are \emph{visually split}, i.e.~a homogeneous word $W$ represents a split link if and only if $W$ is split~\cite{Cromwell_93}.

\begin{lemma}\label{lemma:decomposition_circles}
Let $W$ be a braid word on $n$ strands.
\begin{enumerate}[label=(\roman*)]
\item $D(W)$ admits a decomposition circle if and only if $g_i(W) \in \{2, 3\}$ for some~$i$.
\item %
Assume $W$ is homogeneous, non-split, and $D(W)$ admits no honest decomposition circle.
Let $W'$ obtained from $W$ by destabilizing as often as possible.
Then $g_i(W') \geq 4$ for all~$i$.
\end{enumerate}
\end{lemma}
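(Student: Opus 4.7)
My plan is to establish part (i) in two directions, and then derive (ii) by combining (i) with the characteristic visual splitness and visual triviality of homogeneous braids.

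For the ``if'' direction of (i), I will construct a decomposition circle explicitly. Given $g_i(W)\in\{2,3\}$, the word $W_{i-1,i}$ splits into $g_i(W)$ maximal blocks alternating between the two generator types $\sigma_{i-1}^{\pm 1}$ and $\sigma_i^{\pm 1}$. Pick a block that is sandwiched between two blocks of the other type (in the $g_i=3$ case) or a block at one end (in the $g_i=2$ case), and construct a simple closed curve $C$ in $S^2$ that meets $D(W)$ in exactly two points, routed so that the enclosed disk contains the crossings of the chosen block along with appropriate arcs of the adjacent strands. A careful routing ensures that both disks bounded by $C$ contain a component of the diagram passing through some crossing, confirming that $C$ is a decomposition circle.

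For the ``only if'' direction of (i), I would place a given decomposition circle $C$ in standard position with respect to the braid fibration, making it transverse to horizontal slices and intersecting the Seifert circles of $D(W)$ (one per strand position) minimally. A case analysis, relying on $|C\cap D(W)|=2$ and the non-triviality of both sides of $C$, forces the two intersection points to lie on a common strand position $i$ and constrains the crossings on each side to be of at most one of the types $\sigma_{i-1}^{\pm 1}$ or $\sigma_i^{\pm 1}$ in at most one block. Reading this off then gives $g_i(W)\in\{2,3\}$. The main obstacle is controlling how $C$ can wind near the braid's closure arcs; a minimization argument should reduce $C$ to a standard ``long rectangle'' form from which the conclusion is immediate.

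For part (ii), assume $W$ is homogeneous, non-split, and $D(W)$ admits no honest decomposition circle. Since a Markov destabilization corresponds to an R1-type local move on the diagram that preserves the existence and honesty of decomposition circles, $D(W')$ also admits no honest decomposition circle. Moreover, $W'$ is homogeneous with non-split closure, so by visual splitness of homogeneous braids, $W'$ itself is non-split, giving $g_i(W')\geq 2$ for every $i$. Suppose for contradiction that $g_i(W')\in\{2,3\}$ for some $i$, and let $C$ be the decomposition circle of $D(W')$ supplied by part~(i). The key claim is that $C$ is honest. If not, one side of $C$ would be a trivial $1$-tangle; reading this off the explicit form of $C$, it translates into a homogeneous sub-braid of $W'$ whose closure contains an unknotted arc-component split from the rest of the link. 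Visual triviality of homogeneous braids then forces this sub-braid to contain an outer generator ($\sigma_j^{\pm 1}$ with $j$ minimal or maximal among the sub-braid's generators) exactly once, furnishing a Markov destabilization of $W'$. This contradicts the maximality of $W'$, so $C$ is honest, contradicting the hypothesis. We conclude $g_i(W')\geq 4$ for every $i$.
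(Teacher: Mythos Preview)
Your outline for part~(i) and the overall structure of part~(ii) match the paper's. For~(i), the paper's argument is more streamlined than what you sketch: after pushing $C$ off the crossings, each Seifert circle is met an even number of times, so both points of $C\cap D(W)$ lie on a single strand~$i$; one then reads off directly that such a circle exists and is a decomposition circle precisely when $2\leq i\leq n-1$ and $g_i(W)\in\{2,3\}$. No braid-fibration or minimization machinery is needed.

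In part~(ii) there is a genuine gap in your honesty argument. If $C$ fails to be honest, one side is a trivial $1$--tangle \emph{split with a link}~$L'$, so closing off yields $\cl(W'_{1,i-1})$ (or the analogous upper sub-word) equal to an unknot split with~$L'$. You then invoke only visual triviality to force an outer generator to appear exactly once and produce a destabilization. But visual triviality characterizes when the closure \emph{is} the unknot, i.e.\ it only applies when $L'=\varnothing$. When $L'\neq\varnothing$ the closure is a split link, and no destabilization conclusion follows; instead the contradiction must come from visual \emph{splitness}: since $W'$ is non-split so is $W'_{1,i-1}$, hence its closure cannot be split. The paper avoids this case distinction by establishing directly, using both properties, that each summand $\cl(W_{1,i+m_--1})$ and $\cl(W_{i+m_-,n-1})$ is a non-trivial non-split link: non-trivial because $W'$ cannot be destabilized (so the relevant generator appears at least twice), and non-split because $W$ is non-split. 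Either of these two ways closes the gap; your argument as written does not.
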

\begin{proof}
\textbf{(i).} Let $C\subset S^2$ be a circle transverse to~$D(W)$.
After an isotopy pushing $C$ away from the crossings of~$D(W)$, all intersection points $C\cap D(W)$ are on one of the strands. It follows that $C$ intersects each strand an even number of times.
Thus if $C$ has two intersections with~$D(W)$, then $C$ intersects only one strand (let us say the $i$--th one), and does so twice.  One checks that this is possible if and only if~$i = 1$, $i = n$, or $2\leq i\leq n-1$ and~$g_i(W) \leq 3$. Moreover, $C$ is a decomposition circle if and only if $2 \leq i \leq n-1$ and~$g_i(W) \geq 2$.

\textbf{(ii).}
Denote by $m_-$ and $m_+$ the number of times $W$ has been destabilized at the lower and upper end, respectively, to obtain~$W'$.
Assume towards a contradiction that $g_i(W') \leq 3$ for some~$i$.
Since $W'$ is non-split,~$g_i(W') \geq 2$.
Hence, as proven above, there is a decomposition circle $C'$ of $D(W')$ that intersects the $i$--th strand of $W'$ twice. This gives rise to a decomposition circle $C$ of $D(W)$ that intersects the $(i + m_-)$--th strand of $W$ twice.
This circle decomposes the link $\cl(W)$ as a connected sum of $L_- \coloneqq \cl(W_{1,i+m_- -1})$ and~$L_+\coloneqq\cl(W_{i+m_-,n})$. Since $W'$ is non-split and cannot be destabilized, $W_{1,i+m_- -1}$ contains at least two copies of~$\sigma_{m_-+1}^{\pm 1}$. Similarly, $\cl(W_{i+m_-,n})$ contains at least two copies of~$\sigma_{n-m_+}^{\pm 1}$.
Using that homogeneous braids are visually trivial and visually split,
it follows that $L_+$ and $L_-$ are non-trivial non-split links,
and thus $C$ is an honest decomposition circle of~$W$.
\end{proof}

\subsection{Seifert surfaces of braids as Murasugi sums}

Let us now describe the Seifert surface $\Sigma(W)$ for~$\cl(\beta)$,
obtained by applying Seifert's algorithm to~$D(W)$.
The surface $\Sigma(W)$ consists of
disks~$D_1, \dots, D_n$, such that $D_i$ has radius~$n-i+1$, center~$(0,0,i)$, is parallel to the $xy$--plane, and its boundary is the $i$--th strand of~$W$;
as well as one twisted band for each crossing for each crossing of~$W$.
We call $D_1$ the \emph{bottom disk} and $D_n$ the \emph{top disk}.
The following lemma gives a precise version of the well-known decomposition of
$\Sigma(W)$ as Murasugi sum along the disks~$D_i$.
Examples can be seen in \cref{fig:braidanddiagr},
\cref{fig:murasugisforbetaprandbetadec}, and
\cref{fig:primsecMurasugisum} in \cref{sec:sketch}.
\begin{lemma} \label{lemma:braid_murasugi}
Let $W$ be a braid word on $n$ strands and let~$i \in \{2, \ldots, n-1\}$.
\begin{enumerate}[label=(\roman*)]
\item $\Sigma(W)$ is isotopic to a Murasugi sum of $\Sigma(W_{1,i-1})$ and~$\Sigma(W_{i,n-1})$.
\item The summing region $P$ is embedded into the top disk of $\Sigma(W_{1,i-1})$ and into the bottom disk of~$\Sigma(W_{i,n-1})$.
\item $P$ has $g_i(W)$ sides if $g_i(W)$ is even, and $g_i(W)-1$ sides if $g_i(W)$ is odd.
\item If~$g_i(W) \geq 4$, then the Murasugi sum is essential.
\end{enumerate}
\end{lemma}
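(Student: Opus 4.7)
The plan is to read off the Murasugi sum structure directly from the geometry of the Seifert algorithm. Recall that $\Sigma(W)$ consists of the disks $D_1,\dots,D_n$ together with one twisted band per crossing, each $\sigma_j^{\pm 1}$--band joining $D_j$ to $D_{j+1}$. Fixing~$i$, the bands recorded by $W_{1,i-1}$ are exactly those between $D_1,\dots,D_i$, so together with these disks they form $\Sigma(W_{1,i-1})$; the bands from $W_{i,n-1}$ lie between $D_i,\dots,D_n$ and form $\Sigma(W_{i,n-1})$. The two subsurfaces overlap in~$D_i$, which appears as the top disk of the former and, after reindexing, the bottom disk of the latter. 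This settles~(ii) once a suitable summing polygon $P\subset D_i$ has been located.

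To construct $P$ and prove~(iii), I analyze the cyclic pattern of band attachments on~$\partial D_i$. As strand~$i$ descends once through the braid it meets the letters of $W_{i-1,i}$ in order, so the band-attachment arcs appear along $\partial D_i$ in that cyclic order with a single long free ``closure arc'' completing the circle. Reading $W_{i-1,i}=V_1\cdots V_{g_i(W)}$ as its maximal monochromatic blocks, the attachments group on~$\partial D_i$ into $g_i(W)$ same-type blocks separated by free arcs; call a free arc a \emph{transition} if the two blocks it separates have opposite types. When $g_i(W)$ is even, $V_1$ and $V_{g_i(W)}$ have opposite types, so every inter-block free arc (including the closure arc) is a transition, giving $g_i(W)$ transitions; when $g_i(W)$ is odd, $V_1$ and $V_{g_i(W)}$ agree in type, the closure arc is \emph{not} a transition, and only $g_i(W)-1$ transitions remain. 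Place the vertices of $P$ on every other transition and join consecutive chosen transitions by chords across~$D_i$; the resulting polygon $P$ then has these chords as its proper arc sides and short sub-arcs of the chosen transitions as its boundary sides, and its total side count equals the number of transitions, giving~(iii).

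For~(i), the boundary sides lie on free arcs of $\partial D_i$ and hence on $\partial\Sigma(W_{1,i-1})\cap\partial\Sigma(W_{i,n-1})$, while the chord sides are interior arcs of $D_i$ and thus properly embedded in both summands; one then verifies via an Euler characteristic count and a direct isotopy that the Murasugi sum so defined recovers $\Sigma(W)$. For~(iv), each chord cuts $D_i$ into two subdisks, and a chord fails to be essential in $\Sigma(W_{1,i-1})$ precisely when one of these subdisks has its $\partial D_i$-portion free of $\sigma_{i-1}^{\pm 1}$-attachments (analogously for $\Sigma(W_{i,n-1})$ with $\sigma_i^{\pm 1}$-attachments). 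The main obstacle is this final combinatorial check: one must verify that, with the alternating placement chosen above, each chord separates the cyclic block sequence into two groups both containing blocks of each type whenever $g_i(W)\geq 4$. This holds because each such chord isolates exactly two consecutive cyclic blocks on one side and the remaining blocks on the other, and the alternation of types guarantees both groups include blocks of each type as soon as each group has at least two blocks, which is precisely the condition $g_i(W)\geq 4$.
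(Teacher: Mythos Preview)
Your overall plan---recognizing $\Sigma(W_{1,i-1})$ and $\Sigma(W_{i,n-1})$ as the lower and upper halves overlapping in $D_i$, and extracting the polygon from the cyclic pattern of block attachments on $\partial D_i$---is the same as the paper's. The problem is your explicit polygon. By placing vertices on \emph{every other} transition, each of your chord sides spans two consecutive blocks, one of $\sigma_{i-1}^{\pm1}$-type and one of $\sigma_i^{\pm1}$-type; such a chord is then a properly embedded arc in \emph{both} $\Sigma(W_{1,i-1})$ and $\Sigma(W_{i,n-1})$, while your short boundary sub-arcs lie on the boundary of both. This violates the defining alternation of a Murasugi summing region, in which each side is a proper arc in exactly one summand and a boundary arc in the other. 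Concretely, the lunes making up $D_i\setminus P$ then belong to both summands, so the abstract gluing $\Sigma(W_{1,i-1})\cup_P\Sigma(W_{i,n-1})$ would contain each lune twice and fail to recover~$\Sigma(W)$; no Euler-characteristic check will repair this.

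The correct construction---which is what the paper does---places one vertex at \emph{each} transition (dropping only the non-transition closure arc when $g_i(W)$ is odd), so that every side of $P$ is a chord of $D_i$ spanning exactly one block. A side spanning a $\sigma_{i-1}^{\pm1}$-block is then proper in $\Sigma(W_{1,i-1})$ and boundary-parallel in $\Sigma(W_{i,n-1})$, and vice versa, giving the required alternation. Your essentiality criterion in~(iv) (a chord is inessential in a summand exactly when one of the two subdisks it cuts off carries no bands of the relevant type) is correct in spirit, but it must be applied to these single-block chords; the paper instead checks essentiality by exhibiting, for each side, a simple closed curve running through two twisted bands that meets that side exactly once.
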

\begin{proof}
Cutting $\Sigma(W)$ in half along the plane containing $D_i$
decomposes $\Sigma(W)$ as a union along $D_i$ of two Seifert surfaces:
a lower one isotopic to $\Sigma(W_{1,i-1})$ and an upper one isotopic to~$\Sigma(W_{i,n-1})$.
This proves (i) and~(ii), using the ambient definition of Murasugi sum as discussed in \Cref{rmk:visualMS}. %

Let us now explicitly define~$P$.
Recall from the definition of $g_i(W)\eqqcolon g$ that $W_{i-1,1} = V_1\cdots V_{g}$,
where the words $V_{\ell}$ consist alternatingly only of~$\sigma_{i-1}^{\pm 1}$,
or only of~$\sigma_{i}^{\pm 1}$.
Now, let us pick points $p_1, \ldots, p_{g} \in \partial\Sigma(W)$
that lie in this order on the circle~$\partial D_i$,
such that $p_j$ is between the spots where the twisted band corresponding to the last letter in $V_j$ and the twisted band corresponding to the first letter in $V_{j+1}$ are affixed to $D_i$
(where we see indexes modulo $g$ so that~$V_{g+1} = V_1$).
Let the polygon $P$ now be given a subset of the disk~$D_i$, with vertices $p_1, \ldots, p_{g-1}$
(and $p_g$ if $g$ is even), and edges given as chords of~$D_i$.
One checks that after an isotopy, the sides of $P$ indeed belong alternatingly to
the boundary of the lower and the upper Murasugi summand of~$\Sigma(W)$,
thus proving~(iii).

Finally, to show~(iv), let us assume~$g \geq 4$. Let $1\leq j\leq g$ and $j < g$ if $g$ is odd.
Let us treat indexes modulo $g$ if $g$ is even, and modulo $g-1$ if $g$ is odd.
Pick two twisted bands, such that both connect $D_i$ and~$D_{i-1}$, or both connect $D_i$ and~$D_{i+1}$: one of them affixed to $D_i$ between $p_j$ and~$p_{j+1}$, the other between $p_{j+2}$ and~$p_{j+3}$.
Pick a simple closed curve $\gamma_j$ in $\Sigma(W)$ that consists of the core curves of those two twisted bands, and a chord in each of $D_i$ and~$D_{i\pm 1}$.
Observe that $\gamma_j$ is either contained in the lower or in the upper Murasugi summand of~$\Sigma(W)$, depending on the sign in~$D_{i\pm 1}$.
Moreover, $\gamma_j$ intersects the side of $P$ between $p_j$ and $p_{j+1}$ exactly once.
That is sufficient to prove that this side is essential.
Overall, the sides of $P$ are alternatingly essential in the two Murasugi summands of~$\Sigma(W)$; hence, the Murasugi sum is essential by definition.
\end{proof}

\subsection{Proof of \texorpdfstring{\cref{thm:main}}{Theorem \ref{thm:main}}}
We are now ready to prove visual primeness of homogeneous braids.
\begin{proof}[Proof of \cref{thm:main}]
Let us prove the contrapositive:
given a homogeneous braid word $W$ with closure a link~$L$,
such that $D(W)$ does not admit an honest decomposition circle,
we shall show that $L$ is prime.

Because a link is prime if all of its split components are,
the case of split $W$ follows from the case of non-split~$W$,
e.g.~by induction over the number of strands.
So let us assume henceforth that $W$ is non-split.

Then $W$ satisfies the hypothesis of \cref{lemma:decomposition_circles}~(ii).
So, for $W'$ obtained from $W$ by destabilizing as often as possible,
we have $g_i(W') \geq 4$ for all~$i$. Note that $\Sigma(W)$ and $\Sigma(W')$ are isotopic.
Hence it suffices to prove that $\Sigma(W')$ is prime, i.e. that its associated open book for $S^3$ is prime.
Also note that $W'$ is itself homogeneous and non-split. Let $n$ be the number of strands of~$W'$.

An iterative application of \cref{lemma:braid_murasugi}
decomposes $\Sigma(W')$ as an $(n-1)$--fold Murasugi sum
with summands~$\Sigma(W'_{i,i})$.
Because $W'$ is homogeneous, every $W'_{i,i}$ consists either of $k$ copies of~$\sigma_1$, or of $k$ copies of~$\sigma_1^{-1}$, with~$k\geq 2$. So $\Sigma(W'_{i,i})$ is the fiber surface of the $T(2,\pm k)$--torus link.

This Murasugi sum decomposition of $\Sigma(W')$ is encoded by a tree of open books~$T$
as defined in Section~\ref{sec:murasugi_sum}. Namely, the vertices of $T$ are the open books of the~$\Sigma(W'_{i,i})$;
and there is an edge precisely between $\Sigma(W'_{i-1,i-1})$ and $\Sigma(W'_{i,i})$ for~$2\leq i \leq n - 1$.
Note that the tree $T$ simply has the shape of a line.
The edge between $\Sigma(W'_{i-1,i-1})$ and $\Sigma(W'_{i,i})$
is labeled by the summing region $P_i$ given by \cref{lemma:braid_murasugi}.
Let us check that $T$ satisfies the hypothesis of a tree of open books.
Firstly, the summing regions are disjoint, because $P_i$ is embedded in the $i$--th disk $D_i$ of~$\Sigma(W')$. Secondly, since $g_i(W') \geq 4$ for all~$i$, it follows from \cref{lemma:braid_murasugi}~(iv)
that $P_i$ is essential. So $T$ is indeed a tree of open books.
The theorem now follows from \cref{thm:prime_trees}.
\end{proof}

\section{Trefoil plumbings do not preserve primeness} \label{sec:trefoilplumbing}

We will now prove \Cref{prop:nonprimnessoftrefoilplumbing}, which involves the construction of an open book $(\Sigma_\rho,\rho)$ and an essential arc $a$ such that trefoil plumbing along $a$  yields an open book that has a fixed essential separating arc~$\gamma$.
Further modifications will allow us to see that we can arrange for the binding to be connected and for the original open book to have no fixed essential arcs.

\begin{proof}[Proof of \Cref{prop:nonprimnessoftrefoilplumbing}]
Consider the genus 2 surface with two boundary components. We denote it by $\Sigma_\rho$ and pick an explicit model: we view $\Sigma_\rho$ as the result of surgering one-holed tori onto an annulus at two spots given by two marked points; see left-hand side of \Cref{fig:SigmaP}. \begin{figure}[b]
\centering
    \includegraphics[width=0.9\linewidth,origin=c]{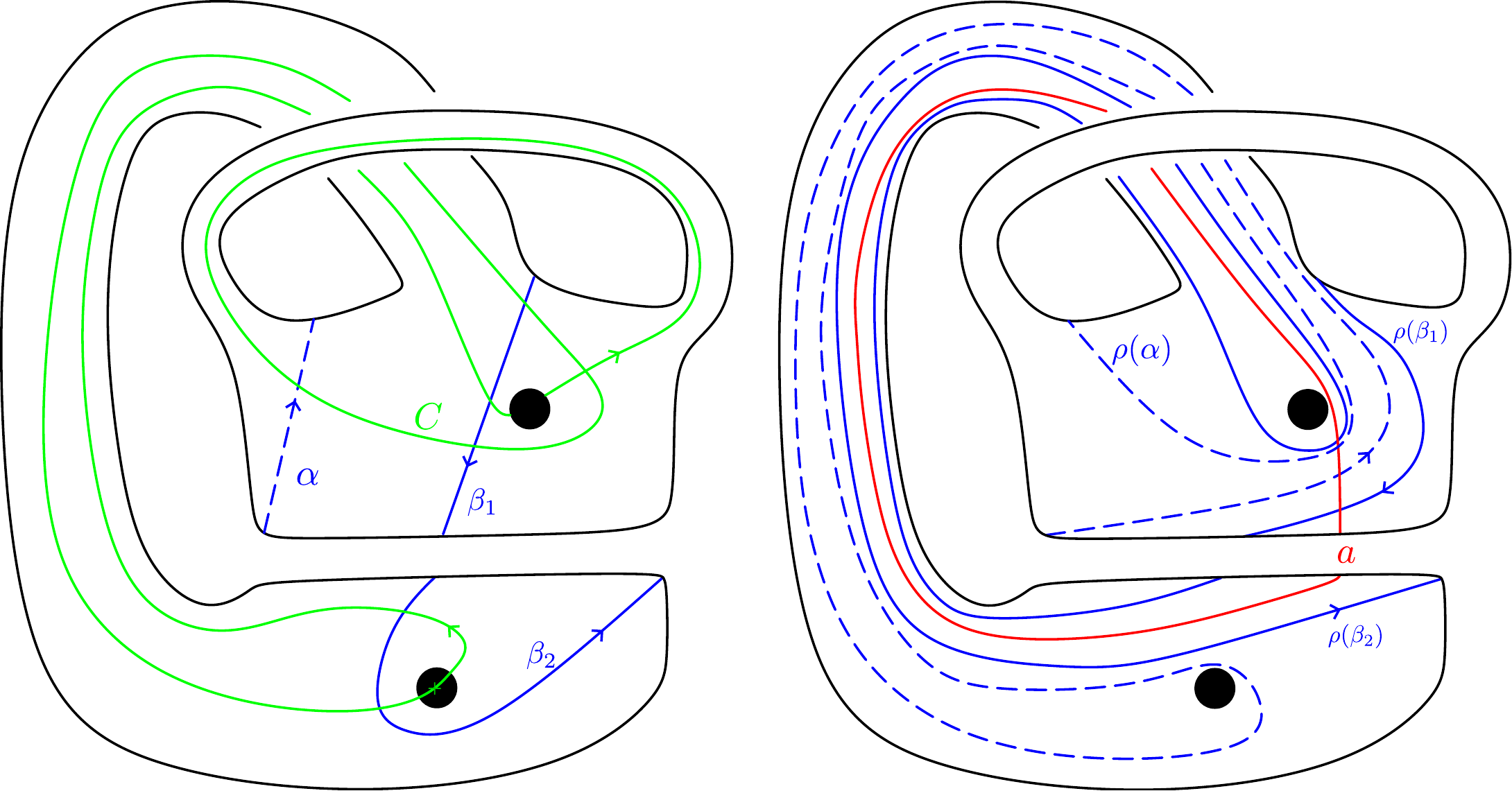}
    \caption{Left: The surface $\Sigma_{\rho}$, depicted as an annulus with two discs (black) which indicate where two once punctured tori are surgered in, and three essential arcs $\alpha$ (dotted, blue), $\beta_1$ (blue), and $\beta_2$ (blue). Also an immersed circle $C$ (green) in the annulus that runs through both discs. 
    Right: The images of $\alpha$, $\beta_1$,  and $\beta_2$ under~$\rho$, and the stabilization arc $a$ (red).}
\label{fig:SigmaP}
  \end{figure}
  We also pick three essential arcs $\alpha$, $\beta_1$, and $\beta_2$ in~$\Sigma_{\rho}$. We will use those
arcs together with the cores of the handle of the plumbings to obtain the fixed arc~$\gamma$.

We define the monodromy $\rho$ in two steps. Firstly, we define it as a diffeomorphism of the annulus with two marked points. For this we pick $\rho(\alpha)$, $\rho(\beta_1)$, and $\rho(\beta_2)$ as depicted on the right-hand side of \Cref{fig:SigmaP}. There exists a diffeomorphism mapping $\alpha$, $\beta_1$, and $\beta_2$ to $\rho(\alpha)$, $\rho(\beta_1)$, and $\rho(\beta_2)$ that fixes the boundary pointwise and the two marked points as a set, and this diffeomorphism, which we denote by~$\rho$, is unique up to isotopy relative boundary. To see this, we note that $\alpha$, $\beta_1$, $\beta_2$ cut the annulus into a disc (in fact a $6$--gon) and discs with one marked point (one bigon and one punctured $4$--gon each with one marked point), and the same is true for $\rho(\alpha)$, $\rho(\beta_1)$, $\rho(\beta_2)$.\footnote{As an alternative description of~$\rho$, the reader may consider the immersed oriented circle $C$ with a marked point in the annulus depicted on the left-hand side of \Cref{fig:SigmaP}, and take $\rho$ to be the half-push map that exchanges the two marked points, starting with the one marked on $C$ and pushing in the direction of the orientation. We will not use the latter description of~$\rho$.}
Secondly, we choose an extension of $\rho$ to a self--diffeomorphism of~$\Sigma_\rho$, which, by abuse of notation, we denote as $\rho\colon \Sigma_\rho \to \Sigma_\rho$. By construction, $\rho$ interchanges the two one-holed tori surgered to the annulus at the marked points.

Next, we choose an essential arc $a$ as depicted in the right-hand side of \Cref{fig:SigmaP}, along which we perform a positive stabilization yielding an open book~$(\Sigma_1,\phi_1)$.
By viewing $\Sigma_\rho$ as a subsurface of~$\Sigma_1$, $\Sigma_1$ naturally features a copy of the arc $\alpha$ but furthermore features an arc $\beta$ given as the union of $\beta_1$ and $\beta_2$ with a core of the newly attached handle $\Sigma_1\setminus \mathrm{int}(\Sigma_\rho)$; see the left of \Cref{fig:Sigma1}.
Extending $\rho$ to $\Sigma_1$ by the identity, the resulting monodromy $\phi_1$ can be understood as the composition of $\rho$ with a positive Dehn twist $\tau_{a'}$ along the simple closed curve given as the union of $a$ with a core of the new handle. The result of applying $\phi_1$ to $\alpha$ and $\beta$ is depicted on the right in \Cref{fig:Sigma1}.
\begin{figure}[ht]
\centering
    \includegraphics[width=0.9\linewidth,origin=c]{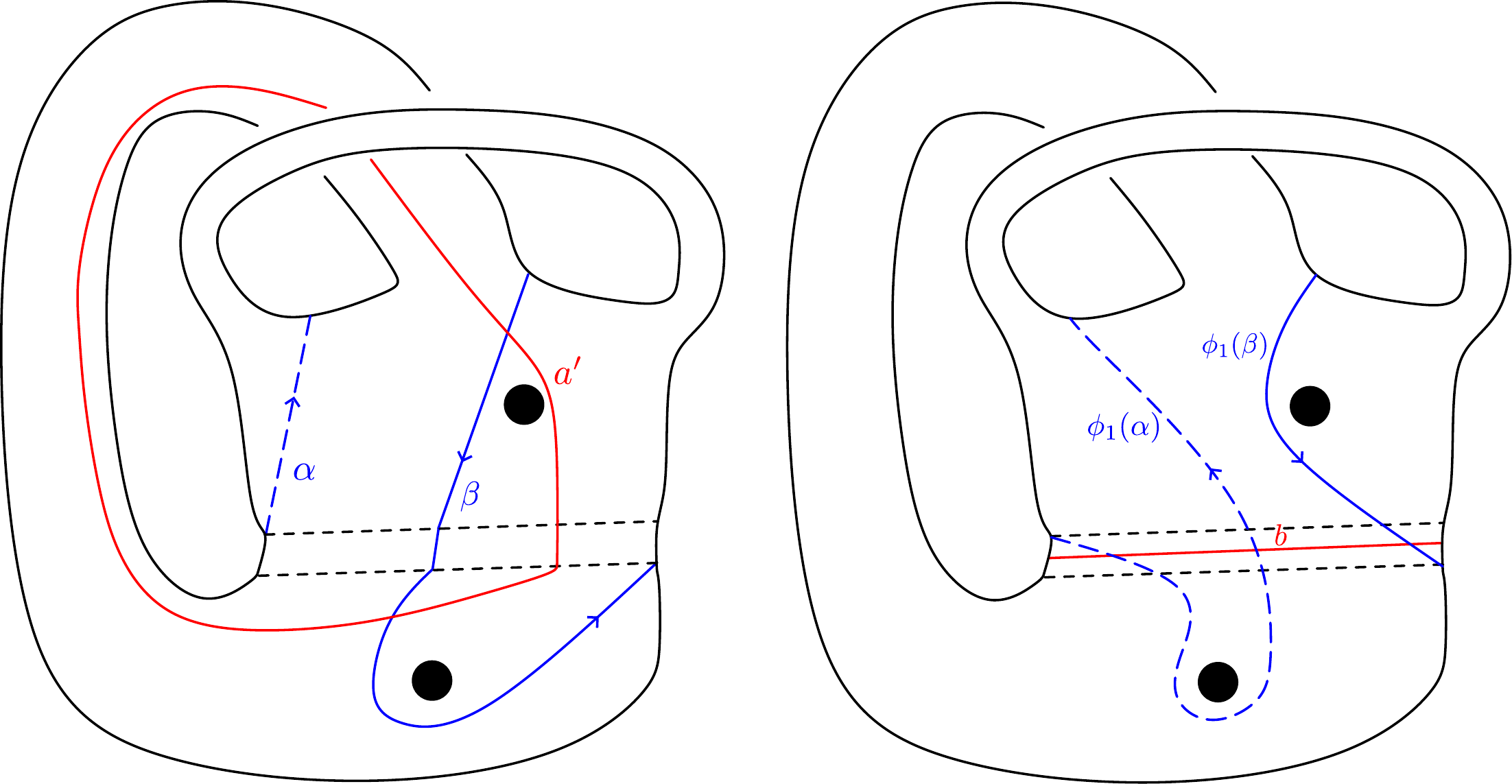}
    \caption{Left: the arcs $\alpha$ and $\beta$ on~$\Sigma_1$, together with the curve $a'$ (obtained by joining $a$ with the core of the handle). The new handle corresponding to the stabilization is bounded by the dashed lines. Right: the images $\phi_1(\alpha) = \tau_{a'}(\rho(\alpha))$ and $\phi_1(\beta) = \tau_{a'}(\rho(\beta))$; and the cocore of the handle $b$ along which we perform the second stabilization.}
\label{fig:Sigma1}
  \end{figure}

We also consider the arc $b$ in $\Sigma_1$ as given in \Cref{fig:Sigma1}, and let $(\Sigma,\phi)$ be the open book resulting from  $(\Sigma_1,\phi_1)$ by positive stabilization along~$b$. Since $b$ is a cocore of the handle $\Sigma_1\setminus \mathrm{int}(\Sigma_\rho)$, the open book $(\Sigma,\phi)$ is equivalent to the trefoil plumbing of $(\Sigma_\rho,\rho)$ along~$a$. We next describe a fixed essential separating arc $\gamma$ of~$(\Sigma,\phi)$.

Note that $\alpha$ and $\beta$ are chosen such that they each have one endpoint in a corner of the plumbing region. Thinking of $\Sigma_1$ as a subset of~$\Sigma$, by a small isotopy we can assume that the arcs have that endpoint in a part of $\partial \Sigma_1$ that lies in the interior of~$\Sigma$. We define the arc $\gamma$ in $\Sigma$ to be the union of $\alpha$, $\beta$, and a core $h$ of the newly attached handle $\Sigma\setminus \mathrm{int}(\Sigma_1)$, see the left hand side of \Cref{fig:second_plumbing}.
One readily checks that $\gamma$ is an essential separating arc, and it is not hard to see that $\gamma$ is fixed. Indeed, note for example~$\phi_1(\gamma)$, which equals the union of $\phi_1(\alpha)\cup \phi_1(\beta)\cup h$, is isotopic to the result of applying~$\phi_2^{-1}$. Here $\phi_1\colon \Sigma\to\Sigma$ is the extension by the identity of the monodromy of   $(\Sigma_1,\phi_1)$ and $\phi_2\colon \Sigma\to\Sigma$ is the extension by the identity of the positive Dehn twist that is the monodromy of the Hopf band that was plumbed in the stabilization along~$b$.
A representative of $\phi_1(\gamma) = \phi_2^{-1}(\gamma)$, chosen so that it differs from $\gamma$ only in the plumbing region, can be seen on the right-hand side of \Cref{fig:second_plumbing}.

\begin{figure}[ht]
\centering
    \includegraphics[width=0.9\linewidth,origin=c]{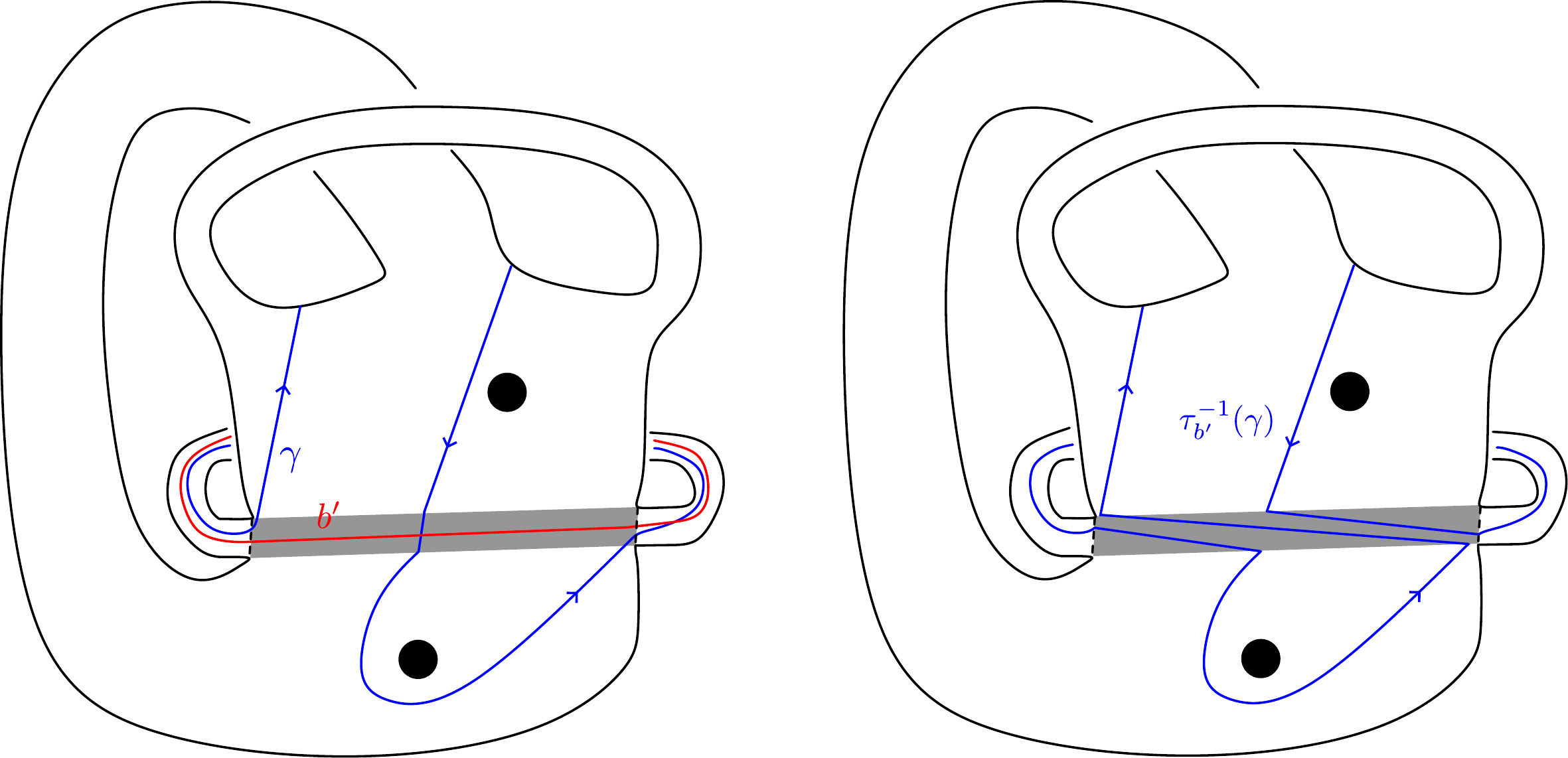}
    \caption{Left: the surface~$\Sigma$, and on it the arc $\gamma$ (the union of~$\alpha$, $\beta$ and the curve $b'$ obtained by joining $b$ and the core of the new handle), and the plumbing region (grey). Right: the arc $\tau_{b'}^{-1}(\gamma) = \phi_2^{-1}(\gamma) = \phi_1(\gamma)$. Note that, following \Cref{lemma:lemma1}, we have chosen a representative of its isotopy class such that it differs from $\gamma$ only in the plumbing region.}
\label{fig:second_plumbing}
  \end{figure}

It remains to discuss that the open book $(\Sigma_\rho, \rho)$ has no fixed essential arcs, and to establish that we can arrange for the open book to have connected binding.

For the latter, assuming $(\Sigma_\rho, \rho)$ has no essential fixed arcs, we can consider the open book $(\Sigma_0,\phi_0)$ resulting from iteratively plumbing three Hopf bands to $(\Sigma_\rho, \rho)$ along an arc $c$ connecting two boundary components such that $c$ is disjoint from $a$, $\rho(\alpha)$, $\rho(\beta_1)$, $\rho(\beta_2)$.
More precisely, consider the tree $T$ of open books with vertices $v_1=(\Sigma_\rho, \rho)$, $v_2$, $v_3$, and~$v_4$, where $v_2$ and $v_4$ are open books of positive Hopf bands, while $v_3$ is the open book of a negative Hopf band, and take the edge set to be
$\{\{v_1,v_2\},\{v_2,v_3\},\{v_3,v_4\}\}$. 
Further take the labelings $P_1$, $P_2$, and $P_3$ such that $P_1$ is the plumbing region in $\Sigma_\rho$ given as a $4$--gon that is a neighborhood of $c$ (all other plumbing regions are uniquely determined up to isotopy, since in annuli there is only one essential $4$--gon plumbing region up to isotopy).
We claim that the open book $(\Sigma_0,\phi_0)$ (well-defined up to equivalence) associated with $T$ has no fixed essential arc.
To see this we note that, if $T$ had one less vertex, namely~$v_4$, and one less edge, namely $\{v_3,v_4\}$, then the corresponding open book $(\Sigma_0',\phi_0')$ is an essential figure-eight plumbing, which has no fixed essential arc by \Cref{prop:primnessoffigure8plumbing}.  Given that $T$ has the extra vertex and edge, the associated open book is in fact the result of positive stabilization of $(\Sigma_0',\phi_0')$. Now, the plumbing arc for the stabilization veers to the left in $(\Sigma_0',\phi_0')$, $(\Sigma_0',\phi_0')$ has no fixed essential arcs, and the stabilization is positive (meaning we are summing a strictly right-veering open book to $(\Sigma_0',\phi_0')$). Thus, \Cref{prop:nofixedarccriterion} implies that the open book $(\Sigma_0,\phi_0)$ has no fixed essential arcs. By construction $\Sigma_0$ has connected boundary; hence $(\Sigma_0,\phi_0)$ is an open book with connected binding as desired.

Finally, we slightly modify $(\Sigma_\rho, \rho)$ to assure it does not contain fixed essential arcs. First, we argue that we may assume that all essential fixed arcs are disjoint from the interior of $\alpha$, $\rho(\alpha)$ $\rho(\beta_1)$, and~$\rho(\beta_2)$.
To see this, we simply modify $(\Sigma_\rho,\rho)$ by performing figure-eight plumbings along (parallel copies of) the essential arcs $\rho(\alpha)$, $\rho(\beta_1)$, and~$\rho(\beta_2)$.
The result is a new open book $(\Sigma_0,\phi_0)$ with the property that all essential fixed arcs can be isotoped to miss the plumbing arcs by \Cref{rem:fixarcsinfig8}. In particular, if $(\Sigma_0,\phi_0)$ has an essential fixed arc, then $(\Sigma{\rho}P,\rho)$ has an essential fixed arc disjoint from $\rho(\beta_1)$, $\rho(\beta_2)$, and~$\rho(\alpha)$. 
Furthermore, note that $\phi_0$ and $\rho$ map $\alpha$, $\beta_1$, and $\beta_2$ to the same curves since we chose figure-eight plumbing arcs that are disjoint from $\rho(\alpha)$, $\rho(\beta_1)$, and~$\rho(\beta_2)$.
Hence, the construction of the fixed arc $\gamma$ goes through for $(\Sigma_0,\phi_0)$ in the exact same way as for~$(\Sigma_\rho,\rho)$. 

With this we have reduced to the problem of showing that  $(\Sigma_\rho,\rho)$ has no fixed essential arc that is disjoint from $\rho(\beta_1)$, $\rho(\beta_2)$ and $\rho(\alpha)$. We observed earlier that $\Sigma_\rho\setminus (\rho(\beta_1)\cup \rho(\beta_2)\cup \rho(\alpha))$ has three connected components. We discuss that none of these contain fixed essential arcs.
One component is a $6$--gon bounded by segments in $\partial \Sigma_\rho$ and all three arcs $\rho(\beta_1)$, $ \rho(\beta_2)$, $\rho(\alpha)$. Every arc in this $6$--gon that is not boundary parallel in $\Sigma_\rho$ is parallel to one of $\rho(\beta_1)$, $\rho(\beta_2)$, $\rho(\alpha)$, none of which are fixed.
A second component is a bigon with a marked point, where the marked point is replaced by a one-holed torus, that is bounded by a segment in $\partial \Sigma_\rho$ and~$\rho(\beta_1)$. A fixed essential arc $\gamma$ must necessarily enter the one-holed torus, as otherwise it is parallel to the boundary (and hence not essential) or to~$\rho(\beta_1)$, and the latter is not fixed.
Here, entering the one-holed torus is formalized as having non-trivial intersection with the simple closed curve encircling the marked point. However, such a $\gamma$ is disjoint from the curve that encircles the other marked point---respectively one-holed torus---, but $\rho(\gamma)$ intersects said curve since $\rho$ exchanges the two curves that encircle the marked points. In particular, $\gamma$ and $\rho(\gamma)$ are not isotopic.
The third component is a $4$--gon with a marked point, where the marked point is replaced by a one-holed torus, that is bounded by segments in $\partial \Sigma_\rho$, $\rho(\beta_2)$ and~$\rho(\alpha)$. A similar argument as above shows that any fixed essential $\gamma$ must enter the one-holed torus, but does, of course not enter the other one-holed torus, hence $\gamma$ cannot be fixed as $\rho$ exchanges the two one-holed tori. This concludes the proof that we may choose the open book so that it does not contain fixed essential arcs.
\end{proof}

\bibliographystyle{myamsalpha} 
\bibliography{References}
\end{document}